\newcommand{\rme}{{\rm e}}
\newcommand{\rmd}{{\rm d}}
\newcommand{\la}{{\lambda}}
\newcommand{\ep}{{\varepsilon}}
\newcommand{\1}{\mathbbm{1}}
\newcommand{\pd}{{\partial}}
\newcommand{\PP}{{\mathbb P}}
\newcommand{\EE}{{\mathbb E}}
\newcommand{\TT}{{\mathbb T}}
\renewcommand{\theequation}{\thesection.\arabic{equation}}
\newtheorem{theo}{Theorem}[section]
\newtheorem{rem}{Remark}[section]
\newtheorem{lem}{Lemma}[section]
\begin{document}

\title{Ornstein-Uhlenbeck processes of bounded variation }

\author{
Nikita Ratanov}\thanks{Published in  \emph{Methodology
and Computing in Applied Probability}, (2020) DOI 10.1007/s11009-020-09794-x}
   \address{           Universidad del Rosario, Cl.\,12c, No.~4-69, Bogot\'a, Colombia \\        
   {nikita.ratanov@urosario.edu.co}           %  \\
}
\date{}

\maketitle

\begin{abstract}
Ornstein-Uhlenbeck process of bounded variation
 is introduced as a solution of an analogue of the Langevin equation 
with an integrated telegraph process replacing a Brownian motion.
There is an interval $I$ such that the process starting 
from the internal point of $I$ always remains within $I$.
Starting outside, this process a. s. reaches this interval in a finite time.
The distribution of the time for which the process falls into this interval is obtained explicitly.

The certain formulae for the mean and the variance of this process are obtained 
on the basis of the joint distribution of the telegraph process and its integrated copy.

Under Kac's rescaling, the limit process is identified as the classical Ornstein-Uhlenbeck process.

\textbf{Keywords:} \emph{Ornstein-Uhlenbeck process; Langevin equation; telegraph process; Kac's scaling}
\end{abstract}
\date{}
\maketitle

\section{Introduction}

For a long time by various reasons, different finite-velocity diffusion models have been 
studied as a substitute for classical diffusion,  
which is described by a parabolic equation with infinitely fast propagation.
 The main model represents
motions performed by a particle moving along a line at a finite velocity 
and changing directions after exponentially distributed holding times, see \cite{cattaneo}.
The corresponding random process of particle's position 
is called an integrated \emph{telegraph process}.
The distribution of this process is described by the damped wave equation 
(hyperbolic diffusion equation, the so-called telegraph equation).

The one-dimensional version of the telegraph process $\TT(t),\;t\geq0,$ 
with two alternating regimes is well studied, 
starting with the seminal works of M.Kac, see \cite{Kac}. 
This theory has a huge literature, see, for example, surveys in \cite{KR,Zacks}.

To introduce the telegraph process, 
we consider a two-state Markov process $\ep=\ep(t)\in\{0, 1\},$ 
defined on the complete filtered probability space
$(\Omega,\;\mathcal F,\;\mathcal F_t,\;\PP)$.
Process $\ep$ is determined by two positive switching parameters $\la_0, \la_1:$
\[
\PP\{\ep(t+\rmd t)=i~|~\ep(t)=i\}=1-\la_i\rmd t+o(\rmd t),\qquad \rmd t\to0,\;i\in\{0, 1\}.
\]
We define the (integrated) telegraph process by
\[
\TT(t)=\int_0^ta_{\ep(s)}\rmd s,
\]
where $a_0, a_1$ are constants;
$\TT(t)$ is the position of a particle moving in a line
with velocities $a_0$ and $a_1$ alternating at random times.
%
%\begin{color}{magenta}
Since $\ep$ is the time-homogeneous Markov process,
the (conditional) distribution of $\TT(t)-\TT(s)=\int_{s}^{t}a_{\ep(s')}\rmd s'$ 
 and $\TT(t-s)=\int_0^{t-s}a_{\ep(s')}\rmd s'$ are identical for any $s, t,\;0\le s<t,$
precisely, the following identity in law holds:
\begin{equation}
\label{eq:persistence}
\left[\TT(t)-\TT(s)=\left.\int_s^ta_{\ep(s')}\rmd s'~\right|~\mathcal F_s\right]\stackrel{D}{=}
\left[\TT(t-s)=\left.\int_0^{t-s}a_{\ep(s')}\rmd s'~\right|~\ep(0)\right],
\end{equation}
see e.g. \cite{KR}.
%\end{color} 

The Gaussian Ornstein-Uhlenbeck process $X^{OU}$ 
is another class of processes we are interested in. 
This process can be defined as the solution to the stochastic differential equation
\begin{equation}\label{eq:Langevin} 
\rmd X^{OU}(t)=-\gamma X^{OU}(t)\rmd t+\sigma\rmd W(t),\qquad t>0,
\end{equation}
where $W=W(t)$ is the standard Brownian motion and $\gamma>0.$
This model is used in various application areas 
as an alternative to Brownian motion with an average tendency to return, see \cite{Langevin,Maller}.
Let me mention here only two of these application areas.
The Va\v{s}\'i\v{c}ek interest rate model, \cite{vasicek}, is the most famous 
financial application of this process. 
The same processes are also widely used for neuronal modelling, see e. g. \cite{pirozzi}.
Similar application areas have telegraph processes:
for financial applications see e. g. \cite{DiC14,KR,Quant}; %\begin{color}{magenta}
the first steps in the neuronal modelling %\end{color}
based on the telegraph process are presented by \cite{MBE} and \cite[Section 2.3.2]{genadot}.

In this paper, we study the Ornstein-Uhlenbeck process of bounded variation, which is determined 
by the version of Langevin equation \eqref{eq:Langevin} 
when the Brownian motion $W$ is replaced by
telegraph process $\TT.$ 
More precisely, let $X=X(t)$ be a stochastic process defined by the equation
\[
\rmd X(t)=-\gamma_{\ep(t)}X(t)\rmd t+\rmd\TT(t),\qquad t>0,
\]
where $\TT(t)$ is the telegraph process based on the Markov process $\ep.$ 
Since Kac's telegraph process  $\TT$  
is used instead of the Wiener process in the usual Langevin equation, \cite{Langevin},
this equation can be called the Kac-Langevin equation. 
The latter stochastic equation is equivalent to an integral equation of the following form,
\begin{equation}
\label{eq:KacLangevin}
X(t)=x-\int_0^t\gamma_{\ep(s)}X(s)\rmd s+\TT(t),\qquad t>0.
\end{equation}
Here $x=X(0)$ is the starting point of the process $X.$
To the best of my knowledge, such a modification of the 
Langevin equation has not been studied before.

The detailed problem settings are presented by Section \ref{sec3}.
Not surprisingly, the analysis of the distribution of $X(t)$ is not as simple as for 
the Gaussian process $X^{OU}=X^{OU}(t)$. 
The first peculiarity is the following. 

If the starting point $x$ is in the interval $I=(a_1/\gamma_1,\;a_0/\gamma_0),$ $x\in I,$
the process $X(t)$ remains inside the band, that is $X(t)\in(a_1/\gamma_1,\;a_0/\gamma_0),\;t\geq0.$ 
In contrary, if the process $X$ starts from outside of $I,\;x\notin[a_1/\gamma_1,\;a_0/\gamma_0],$
the process reaches $I$ a. s. in finite time (here, we assume that $a_1/\gamma_1<a_0/\gamma_0$).

%\begin{color}{magenta}
The main goal of this paper (see Section \ref{sec4})
is to study the distribution of time over which the process $X = X (t),$ 
starting from the outside of interval $I,$  falls into $I.$ %\end{color}
This problem is associated with the first passage time of the telegraph process, 
 which has been intensively studied recently, see, e. g. \cite{DiC18,MCAP19,MBE,Zacks2004,Zacks}.

To analyse Ornstein-Uhlenbeck process of bounded variation, \eqref{eq:KacLangevin},
 we need to study the properties of the stochastic integral
\begin{equation}\label{eq:stochint}
\mathcal I(t)=\int_0^t\varphi(s)\rmd\TT(s)
=\int_0^t\varphi(s)a_{\ep(s)}\rmd s,\qquad t>0.
\end{equation}
Since $\varphi(\cdot)a_{\ep(\cdot)}$ a. s. has a finite number of discontinuities on $[0, t],$ 
the integral in \eqref{eq:stochint} can be considered as a \emph{pathwise} Riemann integral.
The stochastic process $\mathcal I(t),\;t>0,$ can be considered 
as a generalised telegraph process with two \emph{time-varying} velocity patterns,
$a_0\varphi(t)$ and $a_1\varphi(t),$ alternating after exponentially distributed holding times.
The rectifiable version of such a process has been studied in detail by \cite{JAP2019},
but $\mathcal I=\mathcal I(t),$ defined by \eqref{eq:stochint}, does not belong to this class.

The distribution of $X(t)$ looks much more sophisticated than the Gaussian distribution 
of the Ornstein-Uhlenbeck process $X^{OU}(t)$. Sections \ref{sec5} and \ref{sec6} take
only a few simple first steps for this analysis. 

For completeness, in the Appendix we recall some modern results on
telegraph processes, including explicit formulas for the joint distribution of $X(t)$ and $\ep(t),$
which have never been published before.

%\begin{color}{magenta}
%Without loss of generality,  in this paper we consider only the symmetric
%case, $a_0=-a_1=a,\;a>0.$
%\end{color}
%%%%%%%%%%%%%%%%%%%%%%%%%%%%%%%%%%%%%%%%%%%%%%%
%%%%%%%%%%%%%%%%%%%%%%%%%%%%%%%%%%%%%%%%%%%%%%%

%%%%%%%%%%%%%%%%%%%%%%%%%%%%%%%%%%%%%%%%%%%
%%%%%%%%%%%%%%%%%%%%%%%%%%%%%%%%%%%%%%%%%%%
 \section{The problem setting}\label{sec3}
 %Kac-Langevin equation and Ornstein-Uhlenbeck processes}
\setcounter{equation}{0}

 We study the  path-continuous random process $X=X(t),\;t\ge0,$ satisfying
the stochastic equation \eqref{eq:KacLangevin}, that is
\begin{equation}\label{eq:KL}
\rmd X(t)=\left(a_{\ep(t)}-\gamma_{\ep(t)}X(t)\right)\rmd t,\qquad t>0,
\end{equation}
with the initial condition $X(0)=x$. Recall, that $\ep=\ep(t)$ is the two-state Markov process, 
$a_0, a_1\in(-\infty, \infty)$ and $\gamma_0, \gamma_1>0,$ $a_0/\gamma_0>a_1/\gamma_1.$

After applying the usual  integrating factor technique, 
one can see that \eqref{eq:KL} is equivalent to 
\[
\rmd\left(\rme^{\Gamma(t)}X(t)\right)=\rme^{\Gamma(t)}a_{\ep(t)}\rmd t,
\]
where $\Gamma(t)=\int_0^t\gamma_{\ep(s)}\rmd s$ is the integrated telegraph process 
based on the same underlying process $\ep$ as the telegraph process 
$\TT(t)=\int_0^ta_{\ep(s)}\rmd s.$
This yields the formula for the solution of \eqref{eq:KL}:
\begin{equation}
\label{eq:solOU1}
X(t)=\rme^{-\Gamma(t)}\left(x+\int_0^t\rme^{\Gamma(s)}a_{\ep(s)}\rmd s\right)
=\rme^{-\Gamma(t)}\left(x+\int_0^t\rme^{\Gamma (s)}\rmd \TT(s)\right),
\qquad t\geq0.
\end{equation}

The process 
$X=X(t)$ can be considered as a piecewise deterministic path-continuous process 
of bounded variation which follows the two patterns,
\begin{align}
\label{eq:pattern0}
    \phi_0(x, t)& =\rme^{-\gamma_0 t}\left(x+a_0\int_0^t\rme^{\gamma_0s}\rmd s\right)
     % x\rme^{-\gamma t}+a\frac{1-\rme^{-\gamma t}}{\gamma}
      =\frac{a_0}{\gamma_0}+\left(x-\frac{a_0}{\gamma_0}\right)\rme^{-\gamma_0 t},\\
    \label{eq:pattern1}
    \phi_1(x, t)&  =\frac{a_1}{\gamma_1}+\left(x-\frac{a_1}{\gamma_1}\right)\rme^{-\gamma_1 t},\quad t\geq0,
\end{align}
alternating at Poisson times.  
Similarly defined generalisation of the telegraph process 
 were recently studied by \cite{JAP2019}; however, here the
process $X$ does not satisfy the homogeneity property 
with a common rectifying mapping, see \cite[(2.13)]{JAP2019}, which creates new difficulties.

Let $\tau=\tau^{(0)}$ ($\tau=\tau^{(1)}$)
be the first  switching time if  
$\ep(0)=0$ ($\ep(0)=1$).
The following identities of conditional distributions 
can be proved by conditioning on the first switch,
\begin{align}
\label{eq:law01}
    \left[X(t)~|~\ep(0)=0,\;X(0)=x\right]& \stackrel{D}{=}    \left[X(t-\tau)~|~\ep(0)=1,\;X(0)=\phi_0(x, \tau)\right],  \\
    \label{eq:law10}
    \left[X(t)~|~\ep(0)=1,\;X(0)=x\right]& \stackrel{D}{=}    \left[X(t-\tau)~|~\ep(0)=0,\;X(0)=\phi_1(x, \tau)\right].
\end{align}
If there are no switching to the time horizon $t$, that is $\tau>t,$
we have 
\[
  \left[X(t)~|~\ep(0)=0,\;X(0)=x\right]=\phi_0(x, t),\quad \left[X(t)~|~\ep(0)=1,\;X(0)=x\right]=\phi_1(x, t),
  \qquad\text{a.s.}
\]

Note that the mappings $t\to\phi_0(x, t)$ and $t\to\phi_1(x, t)$ satisfy semigroup property, 
\[
\lim_{t\to\infty}\phi_0(x, t)=\frac{a_0}{\gamma_0},\qquad \lim_{t\to\infty}\phi_1(x, t)=\frac{a_1}{\gamma_1}.
\]
A sample path is shown in Fig. \ref{fig1}. 

 \begin{figure}[h]
 \begin{center}
\begin{tikzpicture}[x=1.35cm,y=1.35cm][domain=0:8]   
\draw[->] (-1.2,-0.5) -- (7.5,-0.5); 
\draw[->] (-1,-1.2) -- (-1,1.85); 
\draw[-] (-0.36,-0.5) -- (-0.36,1);
%%%%%%%%%%%%%%%%%%%%%%%%
 \draw[blue,domain=-1:-0.5] 
 	plot(\x,{1+0.5*exp(-\x-1)}); 
%%%%%%%%%%%%%%%%%%
 \draw[blue,domain=-0.5:1.5] 
 plot(\x,{0.5*exp(-\x-1)-1+2*exp(-\x-0.5)}); 
 %%%%%%%%%%%%%%%%%
  \draw[blue,domain=1.5:3.5] 
 plot(\x,{1+(-2+0.5*exp(-2.5)+2*exp(-2))*exp(-\x+1.5)}); 
 %%%%%%%%%%%%%%%%%%%%%%%
 \draw[blue,domain=3.5:6] 
  plot(\x,{(1+(-2+0.5*exp(-2.5)+2*exp(-2))*exp(-4.5+2.5))*exp(-(\x-3.5))-1+exp(-\x+3.5)}); 
   \draw[blue,domain=6:7.5] 
 plot(\x,{((1+(-2+0.5*exp(-2.5)+2*exp(-2))*exp(-4.5+2.5))*exp(-(7-4.5))-1+exp(-7+4.5))*exp(-\x+6)+1-exp(-\x+6)}); 
 %%%%%%%%%%%%%%%%%%%%%%%%%%%%%%%
 \draw [dashed] (-1,1)     to  (7.5,1); 
  \draw [dashed] (-1,-1)     to  (7.5,-1); 
  \draw [dashed] (-0.5,-1)     to  (-0.5,1.25);   
  \draw [dashed] (1.5,-1)     to  (1.5,1); 
    \draw [dashed] (3.5,-1)     to  (3.5,1); 
      \draw [dashed] (6,-1)     to  (6,1); 
        \node at (-0.6,-0.6) {\scriptsize{$\tau_1$}};
                \node at (1.4,-0.6) {\scriptsize{$\tau_2$}};
                        \node at (3.4,-0.6) {\scriptsize{$\tau_3$}};
                                \node at (5.9,-0.6) {\scriptsize{$\tau_4$}};
  \node at (7.5,-0.6) {\scriptsize{$t$}};
    \node at (-1.2,1.8) {\scriptsize{$X$}};
        \node at (-1.2,1.5) {\scriptsize{$x$}};
 \node at (-1.25,1) {\scriptsize{$a_0/\gamma_0$}};
  \node at (-1.3,-1) {\scriptsize{$a_1/\gamma_1$}};
  \node at (-0.2,-0.62) {\scriptsize{$T(x)$}};

     \draw[fill]  (-0.36,-0.5) circle [radius=0.025];
\end{tikzpicture}
 \caption{The sample path of $X=X(t). 
$ {}{}}
 \label{fig1}
 \end{center}
\end{figure}
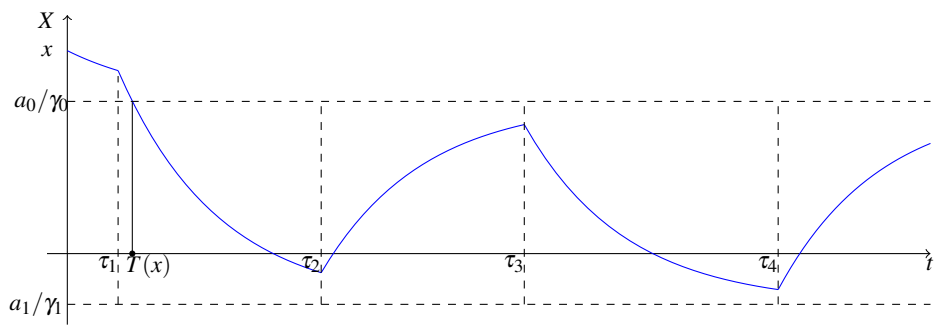 

It follows from the definition that 
if the starting point $x=X(0)$ is in the interval, $x\in I=(a_1/\gamma_1,\;a_0/\gamma_0),$ then
\begin{equation}
\label{eq:strip}
a_1/\gamma_1<\phi_0(x, t),\;\phi_1(x, t)<a_0/\gamma_0,\qquad \forall t\ge0.
\end{equation}
Further, 
if the starting point $x=X(0)$ is outside the interval $I,$ the trajectory $X=X(t)$ a. s. falls into
 $I$  and remains there for all subsequent $t.$

The distribution of this falling time is studied in the next section.

%%%%%%%%%%%%%%%%%%%%%%%%%%%%%%%%%%%%%%
\section{The falling time into the interval $I=(a_1/\gamma_1,\;a_0/\gamma_0).$}
\label{sec4}
\setcounter{equation}{0}

Let $x>a_0/\gamma_0,$ and $T(x)$
 be the time of first passage through the level $a/\gamma$ by the process $X(t),$
which starts at point $x,\;x>a_0/\gamma_0,$ 
\begin{equation}
\label{def:T}
T(x):=\inf\{t:~X(t)<a_0/\gamma_0~|~X(0)=x>a_0/\gamma_0\},
\end{equation}
see Fig.\ref{fig1}.

We denote by  $t^*(x)$ the shortest time
for crossing the level $a_0/\gamma_0$ by the process $X=X(t)$, 
which starts at $x,\; x>a_0/\gamma_0.$
This corresponds to movement only along the pattern $\phi_1(x, t)$ without switching.
Thus, $t^*(x)$  is determined by the formula
\begin{equation}
\label{def:t*}
t^*(x)=\frac{1}{\gamma_1}\log\frac{x-a_1/\gamma_1}{a_0/\gamma_0-a_1/\gamma_1}>0,
\end{equation}
which is the root of the equation, $\phi_1(x, t)=a_0/\gamma_0,$ \eqref{eq:pattern1}.
A motion only with the pattern $\phi_0(x, t),\;x>a_0/\gamma_0,$ \eqref{eq:pattern0}, 
(without switching) never crosses the level $a_0/\gamma_0$.

The distribution of $T(x)$ is supported on $\{t:~t\ge t^*(x)\},$ and can be found in the form of
the (generalised) density functions  $Q_0(t, x)$ and $Q_1(t, x)$,
\begin{equation*}
%\label{def:QQ}
\PP\{T(x)\in\rmd t~|~\ep(0)=0\}=Q_0(t, x)\rmd t,\qquad 
\PP\{T(x)\in\rmd t~|~\ep(0)=1\}= Q_1(t, x)\rmd t,
\end{equation*}
assuming that
\begin{equation*}
%\label{eq:boundary-t}
Q_0(t, x)|_{t<t^*(x)}=0,\qquad Q_1(t, x)|_{t<t^*(x)}=0.
\end{equation*}
%%%%%%%%%%%%%%%%%%%%%%%%%%%%%%%%%%%%%%%%%%%%%

Due to identities \eqref{eq:law01}-\eqref{eq:law10},
functions $Q_0(t, x)$ and $Q_1(t, x)$  
follow the system of the integral equations,
\begin{equation}
\label{eq:Q0Q1}
\left\{
\begin{aligned}
   Q_0(t, x) &=\int_0^t\la_0\rme^{-\la_0\tau}   Q_1(t-\tau, \phi_0(x, \tau))   \rmd\tau,   \\
    Q_1(t, x)&=\rme^{-\la_1t^*(x)}\delta(t-t^*(x))+  \int_0^{t^*(x)}\la_1\rme^{-\la_1\tau}Q_0(t-\tau, \phi_1(x, \tau))\rmd\tau,    
\end{aligned}
\right. 
\end{equation}
 $t>t^*(x),\;x>a_0/\gamma_0.$ Here, $\delta=\delta(\cdot)$ is Dirac's delta-function.

By definition of $T(x)$, \eqref{def:T}, equations \eqref{eq:Q0Q1} must be supplied with 
the boundary conditions
\begin{equation}
\label{eq:boundary-x}
\lim_{x\downarrow a_0/\gamma_0}Q_0(t, x)=\la_0\rme^{-\la_0t},\qquad
\lim_{x\downarrow a_0/\gamma_0}Q_1(t, x)=\delta(t).
\end{equation}
Since $\lim_{x\downarrow a_0/\gamma_0}t^*(x)=0$  (see \eqref{def:t*})
and $\lim_{x\downarrow a_0/\gamma_0}\phi_0(x, \tau)\equiv a_0/\gamma_0$ 
(see \eqref{eq:pattern0}), the same follows from the equations \eqref{eq:Q0Q1} themselves.

Let
\begin{equation*}
%\label{def:L0L1}
\mathcal L_0:=\frac{\pd}{\pd t}+(\gamma_0 x-a_0)\frac{\pd}{\pd x},\qquad
\mathcal L_1:=\frac{\pd}{\pd t}+(\gamma_1 x-a_1)\frac{\pd}{\pd x}.
\end{equation*}
Since
\begin{equation*}
%\label{eq:L0L1phi0phi1}
\mathcal L_0\left[(\gamma_0 x-a_0)\rme^{-\gamma_0 t}\right]=0,\qquad
\mathcal L_1\left[(\gamma_1 x-a_1)\rme^{-\gamma_1 t}\right]=0,
\end{equation*}
and $(\gamma_1 x-a_1)\dfrac{\rmd}{\rmd x}[t^*(x)]=1,$
we have the following identities:
\[
%\begin{aligned}
\mathcal L_0[\phi_0(x, t)]=0,\qquad
\mathcal L_1[\phi_1(x, t)]=0,\qquad
\mathcal L_1[t^*(x)]=1,
\]
see  \eqref{eq:pattern0}-\eqref{eq:pattern1} and \eqref{def:t*}. 
By applying $\mathcal L_0$ and $\mathcal L_1$ to \eqref{eq:Q0Q1} we obtain 
\begin{equation*}
%\label{eq:Q0Q1-PDE}
\left\{
\begin{aligned}
\mathcal L_0[Q_0(t, x)]=&\la_0\rme^{-\la_0t}Q_1(0, \phi_0(x, t))
-\int_0^t\la_0\rme^{-\la_0\tau}\frac{\rmd}{\rmd\tau}\Big[Q_1(t-\tau,\; \phi_0(x, \tau))\Big]\rmd\tau,
\\
\mathcal L_1[Q_1(t, x)]=&-\la_1\rme^{-\la_1t^*(x)}\delta(t-t^*(x))
+\la_1\rme^{-\la_1t^*(x)}Q_0(t-t^*(x),a/\gamma)\\
&\qquad\qquad\qquad\qquad-\int_0^{t^*(x)}\la_1\rme^{-\la_1\tau}\frac{\rmd}{\rmd\tau}\Big[Q_0(t-\tau, \phi_1(x, \tau))\Big]\rmd\tau. 
\end{aligned}
\right. 
\end{equation*}
Integrating by parts, one can see that
system \eqref{eq:Q0Q1} of the integral equations is equivalent to 
the system of the partial differential equations,
\begin{equation}
\label{eq:PDEQ}
\left\{
\begin{aligned}
    \mathcal L_0[Q_0(t, x)]&=-\la_0Q_0(t, x)+\la_0Q_1(t, x),   \\
    \mathcal L_1[Q_1(t, x)]&=\la_1Q_0(t, x)-\la_1Q_1(t, x), 
\end{aligned}
\right. 
\end{equation}
$t>t^*(x),\; x>a_0/\gamma_0,$ with the boundary conditions \eqref{eq:boundary-x}.

Consider the Laplace transforms
\begin{equation}
\label{def:laplaceq0q1}
\begin{aligned}
\widehat Q_0(q, x)=&\EE_0[\exp(-qT(x))]
=\int_0^\infty\rme^{-qt}Q_0(t, x)\rmd t,\\
\widehat Q_1(q, x)=&\EE_1[\exp(-qT(x))]
=\int_0^\infty\rme^{-qt}Q_1(t, x)\rmd t,
\end{aligned}\qquad q>0.\end{equation}
Note that $0\leq\widehat Q_i(q, x)\leq1,\;i\in\{0, 1\}.$ 
Functions $\widehat Q_0(q, x)$ and $\widehat Q_1(q, x)$ have a sense of the 
complementary cumulative distribution function 
of $\overline X_{\rme_q}=\max\limits_{0\le t\le\rme_q}X(t),$ where
$\rme_q$ is an exponentially distributed random variable, $\mathrm{Exp}(q),$
independent of $X.$
Indeed, integrating by parts in \eqref{def:laplaceq0q1}, one can see 
\[
\begin{aligned}
 \widehat Q_i(q, x)=&\int_0^\infty   \rme^{-qt}\rmd\left[\PP\{T(x)<t~|~\ep(0)=i\}\right]
 =\int_0^\infty   q\rme^{-qt}\PP\{T(x)<t~|~\ep(0)=i\}\rmd t\\
   = & \PP\{T(x)<\rme_q~|~\ep(0)=i\} =\PP\{\overline X_{\rme_q}>x~|~\ep(0)=i\}.
\end{aligned}
\]

System \eqref{eq:PDEQ} corresponds to the system of the ordinary equations,
\begin{equation}
\label{eq:q0q1}
\left\{
\begin{aligned}
    (x-a_0/\gamma_0)\frac{\rmd \widehat Q_0}{\rmd x}(q, x)=
    &-\beta_0(q)\widehat Q_0(q, x)+\beta_0(0)\widehat Q_1(q, x),   \\
     (x-a_1/\gamma_1)\frac{\rmd \widehat Q_1}{\rmd x}(q, x)=
     & \beta_1(0)\widehat Q_0(q, x) -\beta_1(q)\widehat Q_1(q, x),
\end{aligned}
\right.\qquad x>a_0/\gamma_0, 
\end{equation}
where
\begin{equation}
\label{def:beta}
\beta_0(q)=\frac{\la_0+q}{\gamma_0},\qquad \beta_1(q)=\frac{\la_1+q}{\gamma_1}.
\end{equation}
Due to \eqref{eq:boundary-x},
system \eqref{eq:q0q1}
is supplied with the boundary conditions 
\[\widehat Q_0(q, a_0/\gamma_0+)=\la_0/(\la_0+q),\qquad\widehat Q_1(q, a_0/\gamma_0+)=1.\]

Consider the series representations:
\begin{equation*}
%\label{eq:q0q1series}
\widehat Q_0(q, x)=\sum_{n=0}^\infty A_n( x-a_0/\gamma_0)^n,\qquad
\widehat Q_1(q, x)=\sum_{n=0}^\infty B_n( x-a_0/\gamma_0)^n,\qquad x>a_0/\gamma_0.
\end{equation*}
The boundary conditions give $A_0=\la_0/(\la_0+q),\;B_0=1,$ and by the system \eqref{eq:q0q1}
 we have the sequence of coupled equations 
for coefficients $A_n$ and $B_n,\;n\geq0:$
\begin{equation*}
%\label{eq:AnBn1}
\left\{
\begin{aligned}
nA_n=-\beta_0(q)&A_n+\beta_0(0)B_n,\\
nB_n+(n+1)(a_0/\gamma_0-a_1/\gamma_1)B_{n+1}
=\beta_1(0)&A_n-\beta_1(q)B_n,
\end{aligned}
\right.
\end{equation*}
which is equivalent to 
\begin{equation} \label{eq:AnBnBn} 
\left\{
\begin{aligned}
  A_n&=\frac{\beta_0(0)}{\beta_0(q)+n}B_n,   \\
    B_{n+1}&=  \frac{\beta_1(0)A_n-(\beta_1(q)+n)B_n}{(n+1)(a_0/\gamma_0-a_1/\gamma_1)}
    =\frac{\beta_0(0)\beta_1(0)-(\beta_0(q)+n)(\beta_1(q)+n)}
    {(n+1)(a_0/\gamma_0-a_1/\gamma_1)(\beta_0(q)+n)}B_n.
\end{aligned}
\right.\end{equation}
The second equation can be rewritten as
\[
B_{n+1}=-\frac{\left(b_0(q)+n\right)\left(b_1(q)+n\right)}{\beta_0(q)+n}
\cdot\frac{B_n}{(n+1)(a_0/\gamma_0-a_1/\gamma_1)},
\]
where 
\begin{equation}
\label{def:b}
b_{0, 1}=\frac12\left(\beta_0(q)+\beta_1(q)\pm
\sqrt{(\beta_0(q)-\beta_1(q))^2+4\beta_0(0)\beta_1(0)}\right).
\end{equation}

Due to the boundary conditions, $B_0=1,$
\begin{equation*}
%\label{eq:Bn}
\begin{aligned}
B_1=&\frac{b_0b_1}{\beta_0\cdot 1!}\cdot\frac{-1}{a_0/\gamma_0-a_1/\gamma_1},\\
B_2=&\frac{b_0(b_0+1)\cdot b_1(b_1+1)}{\beta_0(\beta_0+1)\cdot2!}
\cdot\left(\frac{-1}{a_0/\gamma_0-a_1/\gamma_1}\right)^2,\\
\;\ldots,\;
B_n=&\frac{(b_0)_n(b_1)_n}{(\beta_0)_n\cdot n!}
\cdot\left(\frac{-1}{a_0/\gamma_0-a_1/\gamma_1}\right)^n,
\end{aligned}
\end{equation*}
and, by the first equation of  \eqref{eq:AnBnBn}, 
\[
A_n=\frac{\beta_0(0)}{\beta_0(q)+n}\cdot\frac{(b_0)_n(b_1)_n}{(\beta_0(q))_nn!}\cdot
\left(\frac{-1}{a_0/\gamma_0-a_1/\gamma_1}\right)^n
=\frac{\la_0}{\la_0+q}\cdot\frac{(b_0)_n(b_1)_n}{(\beta_0+1)_n\cdot n!}\cdot
\left(\frac{-1}{a_0/\gamma_0-a_1/\gamma_1}\right)^n,
\]
\[\qquad n\ge0,\]
where $\beta_0=\beta_0(q),\;\beta_1=\beta_1(q)$ and 
$b_0=b_0(q),\;b_1=b_1(q)$ are defined by \eqref{def:beta} and \eqref{def:b};
$(b)_n=\Gamma(b+n)/\Gamma(b)=b(b+1)\ldots(b+n-1)$ is the Pochhammer symbol.

As a result, functions $\widehat Q_0$ and $\widehat Q_1$ are expressed by 
\begin{equation}
\label{eq:q0q1=}
\begin{aligned}
\widehat Q_0(q, x)=&\frac{\la_0}{\la_0+q}
F\left(b_0(q), b_1(q); \beta_0(q)+1; \frac{a_0/\gamma_0-x}{a_0/\gamma_0-a_1/\gamma_1}\right),\\
\widehat Q_1(q, x)=& 
F\left(b_0(q), b_1(q); \beta_0(q); \frac{a_0/\gamma_0-x}{a_0/\gamma_0-a_1/\gamma_1}\right).
\end{aligned}
\end{equation}
Here $F$ is the Gaussian hypergeometric function, defined by the series
\begin{equation}\label{eq:ghf} 
F(b_0, b_1; \beta; z)=1+\sum_{n=1}^\infty\frac{(b_0)_n(b_1)_n}{(\beta)_n\cdot n!}
\;z^n,%\qquad z=\frac{a-\gamma x}{2a},
\end{equation}
if one of the following conditions holds:
\begin{enumerate}
  \item $|z|<1;$
  \item $|z|=1$ and $\beta-b_0-b_1>0$;
  \item $|z|=1,\;z\neq1,$ and $-1<\beta-b_0-b_1\leq0.$
\end{enumerate}
Function $F$ is also defined by analytic continuation everywhere in $z,\;z\le-1.$
%In other cases the series \eqref{eq:ghf} diverges, 
see 
 \cite[Chap. 9.1]{GR} and \cite{Andrews}.

Therefore, functions $\widehat Q_0$ and $\widehat Q_1$ are defined by formulae \eqref{eq:q0q1=}
and by series \eqref{eq:ghf}, if the starting point $x$ satisfies 
$a_0/\gamma_0\le x<2a_0/\gamma_0-a_1/\gamma_1.$ If $x$ is far from $a_0/\gamma_0,$
analytic continuation is applied.
%%%%%%%%%%%%%%%%%%%%%%%%%%%%%%%%%%%%%
\begin{theo}
If $\la_0>0,$ then a.s.
\[
T(x)<\infty,\qquad x>a/\gamma.
\]
\end{theo}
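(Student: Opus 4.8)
\medskip

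The plan is to read the statement off the explicit Laplace transforms \eqref{eq:q0q1=}. By \eqref{def:laplaceq0q1}, $\widehat Q_i(q,x)=\EE_i[\rme^{-qT(x)}]$ with the convention $\rme^{-q\cdot\infty}=0$; for each realisation of $T(x)\in(0,\infty]$ the map $q\mapsto\rme^{-qT(x)}$ increases to $\1_{\{T(x)<\infty\}}$ as $q\downarrow0$, so monotone convergence gives
\[
\lim_{q\downarrow0}\widehat Q_i(q,x)=\PP\{T(x)<\infty~|~\ep(0)=i\},\qquad i\in\{0,1\}.
\]
Hence it suffices to show that $\widehat Q_0(q,x)\to1$ and $\widehat Q_1(q,x)\to1$ as $q\downarrow0$, for every fixed $x>a_0/\gamma_0$.

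The key algebraic observation is that the parameters $b_{0,1}(q)$ in \eqref{def:b} degenerate at $q=0$: with $\beta_i(0)=\la_i/\gamma_i>0$ one has $(\beta_0(0)-\beta_1(0))^2+4\beta_0(0)\beta_1(0)=(\beta_0(0)+\beta_1(0))^2$, whence $b_0(0)=\beta_0(0)+\beta_1(0)$ and $b_1(0)=0$. Since $(0)_n=0$ for $n\ge1$, any hypergeometric series \eqref{eq:ghf} whose second parameter vanishes collapses to the constant $1$; thus both $F\big(b_0(0),b_1(0);\beta_0(0)+1;z\big)$ and $F\big(b_0(0),b_1(0);\beta_0(0);z\big)$ equal $1$ identically in $z$. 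Combined with $\la_0/(\la_0+q)\to1$ — this is exactly where the hypothesis $\la_0>0$ enters — this yields the claim once one is allowed to let $q\downarrow0$ inside $F$.

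The step I expect to require the most care is precisely this passage to the limit, i.e. the continuity of $q\mapsto F\big(b_0(q),b_1(q);\beta_0(q)+\kappa;z\big)$ at $q=0$, for $\kappa\in\{0,1\}$ and $z=\dfrac{a_0/\gamma_0-x}{a_0/\gamma_0-a_1/\gamma_1}<0$. When $a_0/\gamma_0\le x<2a_0/\gamma_0-a_1/\gamma_1$ we have $-1<z<0$, the series \eqref{eq:ghf} converges, and its general term is dominated, uniformly for $q$ in a neighbourhood of $0$, by a convergent geometric-type series, so termwise passage to the limit is legitimate and the limit is $1$. For $x$ far from the level, $z\le-1$, I would first apply Pfaff's transformation $F(b_0,b_1;\beta;z)=(1-z)^{-b_1}F\big(\beta-b_0,b_1;\beta;\tfrac{z}{z-1}\big)$; since $z<0$ gives $\tfrac{z}{z-1}\in(0,1)$, the transformed series converges and is continuous in $q$ near $0$, while $(1-z)^{-b_1(q)}\to1$ as $b_1(q)\to0$, so the limit is again $F\big(\beta-b_0(0),0;\beta;\tfrac{z}{z-1}\big)=1$. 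Therefore $\widehat Q_0(0+,x)=\widehat Q_1(0+,x)=1$ for all $x>a_0/\gamma_0$, and $\PP\{T(x)<\infty\}=1$ follows.

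As a cross-check, and as an alternative that sidesteps the analytic-continuation regime, the result also admits a purely probabilistic proof. While $X(t)>a_0/\gamma_0$ one has $\rmd X(t)/\rmd t=a_{\ep(t)}-\gamma_{\ep(t)}X(t)<0$ in both regimes, since $a_0/\gamma_0>a_1/\gamma_1$; hence on $\{T(x)=\infty\}$ the path $X$ is strictly decreasing and converges to some $L\ge a_0/\gamma_0$. As $\la_0,\la_1>0$, the ergodic theorem makes the total time spent in regime $1$ a.s. infinite, and there the drift is at most $a_1-\gamma_1L<0$ (if $L>a_0/\gamma_0$), which forces $X(t)\to-\infty$, a contradiction; thus $L=a_0/\gamma_0$. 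Finally, once $X$ enters a small neighbourhood of $a_0/\gamma_0$ the threshold $t^*(X)$ from \eqref{def:t*} is arbitrarily small; because $\la_0>0$ there are a.s. infinitely many $\mathrm{Exp}(\la_1)$-distributed regime-$1$ sojourns, and, by a conditional Borel--Cantelli argument applied via the strong Markov property at the start of each sojourn, one of them a.s. exceeds $t^*$ of the then-current position, so the level is crossed in finite time — contradicting $\{T(x)=\infty\}$.
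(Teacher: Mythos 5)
Your main argument is essentially the paper's own proof: the author likewise sets $q=0$ in \eqref{eq:q0q1=}, observes that one of the two numerator parameters of $F$ vanishes (so the series \eqref{eq:ghf} collapses to $1$), and concludes $\widehat Q_0(0,x)=\widehat Q_1(0,x)=1$. Note only that the paper's labelling in \eqref{def:b} is the opposite of yours ($b_0$ is the \emph{minor} root, so $b_0(0)=0$ and $b_1(0)=\la_0/\gamma_0+\la_1/\gamma_1$), which is immaterial since $F$ is symmetric in its first two parameters. What you add beyond the paper is the justification of the limit $q\downarrow 0$ by monotone convergence, the treatment of the regime $z\le -1$ via Pfaff's transformation (the paper simply asserts $F\equiv 1$, which is in fact immediate because a vanishing numerator parameter makes $F$ the constant polynomial $1$, so no continuation argument is really needed), and the independent pathwise/Borel--Cantelli argument; the latter is a genuinely different, purely probabilistic route that avoids the explicit Laplace transforms altogether and would also show where the hypothesis $\la_0>0$ (and $\la_1>0$) is used dynamically, at the cost of being longer than the one-line computation the explicit formulae afford.
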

%%%%%%%%%%%%%%%%%%%
\begin{proof}
Since $b_0(0)=0$ and $b_1(0)=\la_0/\gamma_0+\la_1/\gamma_1,\;\beta_0(0)=\la_0/\gamma_0,$ 
we have 
\[\PP\{T(x)<\infty~|~\ep(0)=0\}=\widehat Q_0(0, x)=F(0; b_1(0); \beta_0(0)+1; z)
\equiv1,
\]
\[
\PP\{T(x)<\infty~|~\ep(0)=1\}=\widehat Q_1(0, x)=F(0; b_1(0); \beta_0(0); z)
\equiv1, 
\]
which give the proof.
 $\hfill\Box$\end{proof} 

From \eqref{eq:q0q1=} one can obtain the moments of the falling time $T(x).$
For simplicity, we give the explicit formulae for the mean values of $T(x),$
when the initial point $x$ is not so far from the attractive band.

\begin{theo}\label{theo:momentsT}
Let $T(x),\;x\ge a_0/\gamma_0,$ be defined by \eqref{def:T}. 

In the following two cases
\begin{enumerate}
  \item $a_0/\gamma_0\le x<2a_0/\gamma_0-a_1/\gamma_1;$
  \item $x=2a_0/\gamma_0-a_1/\gamma_1$ and $\la_1<\gamma_1;$
\end{enumerate}
the mean values of $T(x)$ are given by the series
\begin{align}
\label{eq:E0}
    \EE\{T(x)~|~\ep(0)=0\}&
    =-b_0'(0)\sum_{n=1}^\infty\frac{(\la_0/\gamma_0+\la_1/\gamma_1)_n}{(1+\la_0/\gamma_0)_n}
    \cdot\frac{z^n}{n}
    %=-\frac{1}{\gamma}\sum_{n=1}^\infty\frac{(\beta_0)_n}{n(\delta_0+1)_n}
   % \cdot\left(\frac{a-\gamma x}{2a}\right)^n
    +\frac{1}{\la_0}<\infty,   \\
    \EE\{T(x)~|~\ep(0)=1\}&
      =-b_0'(0)\sum_{n=1}^\infty\frac{(\la_0/\gamma_0+\la_1/\gamma_1)_n}{(\la_0/\gamma_0)_n}
    \cdot\frac{z^n}{n}<\infty.
    % -\frac{1}{\gamma}\sum_{n=1}^\infty\frac{(\beta_0)_n}{n(\delta_0)_n}
 %   \cdot\left(\frac{a-\gamma x}{2a}\right)^n<\infty.
    \label{eq:E1}
\end{align}
Here 
\[
b_0'(0)=\frac{\la_0+\la_1}{\la_0\gamma_1+\la_1\gamma_0}>0
%\frac12\left(\frac{1}{\gamma_0}
%+\frac{1}{\gamma_1}-\frac{1/\gamma_0-1/\gamma_1}{\la_0/\gamma_0+\la_1/\gamma_1}\right)
\]
is the derivative of the minor root, $b_0(q),$ \eqref{def:b}\textup,
and $z=z(x)=\dfrac{a_0/\gamma_0-x}{a_0/\gamma_0-a_1/\gamma_1}$.

If  $x=2a_0/\gamma_0-a_1/\gamma_1$ and $\gamma_1\le\la_1<2\gamma_1,$ then 
only the series  \eqref{eq:E1} for $\EE\{T(x)~|~\ep(0)=0\}$ is finite.

In all other cases,  
the expectations $\EE\{T(x)~|~\ep(0)=0\}$ and $\EE\{T(x)~|~\ep(0)=1\}$ 
follow after analytic continuation of \eqref{eq:q0q1=}.
\end{theo}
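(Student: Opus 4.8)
The plan is to extract both conditional means directly from the Laplace-transform formulas \eqref{eq:q0q1=}. Since $\widehat Q_i(q,x)=\EE_i[\rme^{-qT(x)}]$ and $\widehat Q_i(0,x)\equiv1$ by the preceding theorem, the elementary monotonicity $0\le\frac{1-\rme^{-qt}}{q}\uparrow t$ as $q\downarrow0$, together with monotone convergence, gives as an identity valued in $[0,\infty]$
\[
\EE_i\{T(x)\}=\lim_{q\downarrow0}\frac{1-\widehat Q_i(q,x)}{q}=-\frac{\pd}{\pd q}\widehat Q_i(q,x)\Big|_{q=0+},\qquad i\in\{0,1\}.
\]
Thus the theorem reduces to differentiating the hypergeometric functions in \eqref{eq:q0q1=} with respect to $q$ at $q=0$ and deciding when the result is a convergent series; note that $z=z(x)\le0$ for $x\ge a_0/\gamma_0$, with $-1<z\le0$ in case (1) and $z=-1$ precisely when $x=2a_0/\gamma_0-a_1/\gamma_1$.

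First I would differentiate the series \eqref{eq:ghf} term by term, the decisive fact being $b_0(0)=0$. Then $(b_0(q))_n|_{q=0}=0$ for every $n\ge1$, so the only surviving contribution to $\partial_q$ of the $n$th term is that of differentiating the factor $(b_0(q))_n$, and $\frac{\rmd}{\rmd b_0}(b_0)_n|_{b_0=0}=(n-1)!$. With $b_1(0)=\la_0/\gamma_0+\la_1/\gamma_1$ and $\beta_0(0)=\la_0/\gamma_0$ this yields, for $\delta\in\{0,1\}$,
\[
-\frac{\pd}{\pd q}F\big(b_0(q),b_1(q);\beta_0(q)+\delta;z\big)\Big|_{q=0}
=-b_0'(0)\sum_{n=1}^\infty\frac{(\la_0/\gamma_0+\la_1/\gamma_1)_n}{(\la_0/\gamma_0+\delta)_n}\cdot\frac{z^n}{n}.
\]
Taking $\delta=0$ gives \eqref{eq:E1} (for $\widehat Q_1$); taking $\delta=1$ and adding the term $+1/\la_0$ coming from the prefactor $\la_0/(\la_0+q)$ of $\widehat Q_0$ (here $F|_{q=0}=1$ is used) gives \eqref{eq:E0}. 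The constant $b_0'(0)$ follows from a short computation with \eqref{def:b}: writing $R(q)^2=(\beta_0(q)-\beta_1(q))^2+4\beta_0(0)\beta_1(0)$ one has $R(0)=\beta_0(0)+\beta_1(0)$ and $\beta_0'\equiv1/\gamma_0$, $\beta_1'\equiv1/\gamma_1$, whence $b_0'(0)=\tfrac12\big(1/\gamma_0+1/\gamma_1-R'(0)\big)=\dfrac{\la_0+\la_1}{\la_0\gamma_1+\la_1\gamma_0}>0$.

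Next I would justify the interchange of $\partial_q$ and $\sum_n$ and read off convergence. Since $q\mapsto b_0(q),b_1(q),\beta_0(q)$ are real-analytic near $q=0$ (the radicand in \eqref{def:b} stays $\ge4\beta_0(0)\beta_1(0)>0$) and $\beta_0(q)+\delta>0$, the composition $F(b_0(q),b_1(q);\beta_0(q)+\delta;z)$ is jointly analytic in $(q,z)$ on a neighbourhood of $\{0\le q\ll1\}\times\{|z|<1\}$, which legitimises term-by-term $q$-differentiation for $|z|<1$; the series in \eqref{eq:E0}--\eqref{eq:E1} are then absolutely convergent power series in $z$, settling case (1). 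For $z=-1$ I would use $(a)_n/(b)_n\sim\tfrac{\Gamma(b)}{\Gamma(a)}\,n^{a-b}$: the $n$th term of \eqref{eq:E1} is of order $(-1)^n n^{\la_1/\gamma_1-1}$ and that of \eqref{eq:E0} of order $(-1)^n n^{\la_1/\gamma_1-2}$, so the alternating-series test (together with the $n$th-term test for divergence) gives convergence of \eqref{eq:E1} iff $\la_1<\gamma_1$ and of \eqref{eq:E0} iff $\la_1<2\gamma_1$, which reproduces the sub-cases $\la_1<\gamma_1$, $\gamma_1\le\la_1<2\gamma_1$, $\la_1\ge2\gamma_1$ distinguished in the statement at $x=2a_0/\gamma_0-a_1/\gamma_1$.

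The hard part will be the boundary $z=-1$: there the series are in general only conditionally convergent, so pushing $\partial_q$ through the sum needs a uniform-in-$q$ bound on their tails --- for instance summation by parts against the alternating signs, or a preliminary Kummer transformation $F(a,b;c;z)=(1-z)^{-a}F(a,c-b;c;\tfrac{z}{z-1})$ which sends $z=-1$ to $z=\tfrac12$ and restores absolute convergence --- and this is precisely where $\la_1<\gamma_1$, respectively $\la_1<2\gamma_1$, enter. When these fail the series diverge, while $\EE_i\{T(x)\}$ is still finite and must be obtained from the analytic continuation of \eqref{eq:q0q1=} (again via the same Kummer transformation, or the standard continuation to $z\le-1$), which is what the last sentence of the statement refers to.
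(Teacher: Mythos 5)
Your proposal is correct and follows essentially the same route as the paper: differentiate $\widehat Q_i(q,x)$ from \eqref{eq:q0q1=} at $q=0$, exploit $b_0(0)=0$ so that only the $F'_{b_0}$ contribution (via $\frac{\rmd}{\rmd b_0}(b_0)_n|_{b_0=0}=(n-1)!$) survives, and compute $b_0'(0)$ from \eqref{def:b}. You are in fact more careful than the paper on two points it leaves implicit --- the monotone-convergence justification of $\EE_i[T(x)]=-\pd_q\widehat Q_i(q,x)|_{q=0+}$ and the boundary analysis at $z=-1$ yielding the thresholds $\la_1<\gamma_1$ and $\la_1<2\gamma_1$.
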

\begin{proof}
Since 
\[
b_0(0)=\frac12\left(\beta_0(0)+\beta_1(0)-\sqrt{(\beta_0(0)-\beta_1(0))^2+4\beta_0(0)\beta_1(0)}\right)=0,
\] 
\[
b_1(0)=\frac12\left(\beta_0(0)+\beta_1(0)+\sqrt{(\beta_0(0)-\beta_1(0))^2+4\beta_0(0)\beta_1(0)}\right)=
\frac{\la_0}{\gamma_0}+\frac{\la_1}{\gamma_1}
\]
and by \eqref{eq:ghf}, 
\[
F'_{b_1}(b_0, b_1; \beta_0+1; z)|_{q=0}=0,\qquad
F'_{\beta_0}(b_0, b_1; \beta_0+1; z)|_{q=0}=0,
\]
we have 
\[
\begin{aligned}
    \EE[T(x)~|~\ep(0)=0]=&-\frac{\pd}{\pd q}\left[\widehat Q_0(q, x)\right]|_{q=0}\\
    =\frac{\la_0}{(\la_0+q)^2}|_{q=0}-\frac{\la_0}{\la_0+q}&\cdot\left( b_0'(0)
    F'_{b_0}+b_1'(0)F'_{b_1}+\beta_0'(0)F'_{\beta_0}
    \right)(b_0, b_1; 1+\beta_0;\; z)|_{q=0}
    \\
   =\frac{1}{\la_0}-&b_0'(0)F'_{b_0}(0,\la_0/\gamma_0+\la_1/\gamma_1; 1+\la_0/\gamma_0;\;z) 
   \end{aligned}
\]
with $z=z(x)=\dfrac{a_0/\gamma_0-x}{a_0/\gamma_0-a_1/\gamma_1}.$
Further,
\[
F'_{b_0}(b_0, b_1; 1+\beta_0;\; z) |_{q=0}
=\sum_{n=1}^\infty\frac{(n-1)!(b_1(0))_n}{(1+\beta_0(0))_n}\cdot\frac{z^n}{n!}
=\sum_{n=1}^\infty\frac{(\la_0/\gamma_0+\la_1/\gamma_1)_n}{(1+\la_0/\gamma_0)_n}\cdot\frac{z^n}{n},
\]
if the series converges.

Formula \eqref{eq:E0} follows from
\[
\begin{aligned}
  b_{0}'(q)|_{q=0}  & =\frac12\left(\frac{1}{\gamma_0}+\frac{1}{\gamma_1}
-\frac{(\la_0/\gamma_0-\la_1/\gamma_1)(1/\gamma_0-1/\gamma_1)}
  {\la_0/\gamma_0+\la_1/\gamma_1}
  \right)  \\
    &  =\frac12\frac{2(\la_0+\la_1)/(\gamma_0\gamma_1)}{\la_0/\gamma_0+\la_1/\gamma_1}
    =\frac{\la_0+\la_1}{\gamma_1\la_0+\gamma_0\la_1}
\end{aligned}
\]

Similarly, one can obtain \eqref{eq:E1}.
  $\hfill\Box$\end{proof}
\begin{figure}[ht]
	\centering
	\hspace*{-2.3cm}
	\subfigure[]
 {\includegraphics[scale=0.27]{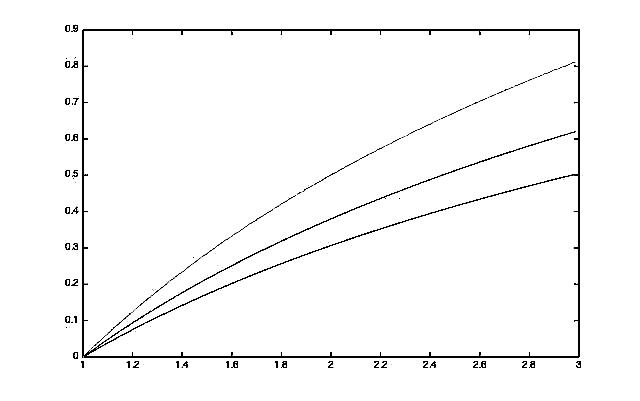}}\quad
	\subfigure[]
	 {\includegraphics[scale=0.295]{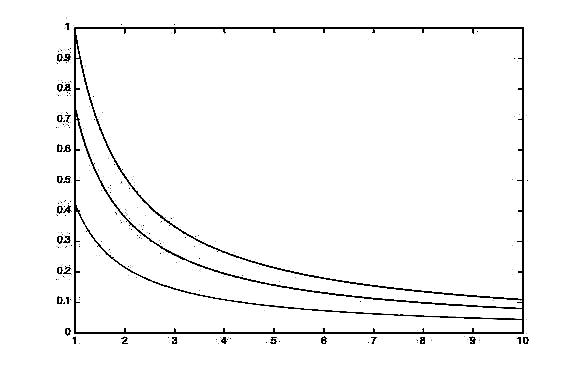}}\\
	 \hspace*{-1.4cm}
	 	\subfigure[]
 {\includegraphics[scale=0.33]{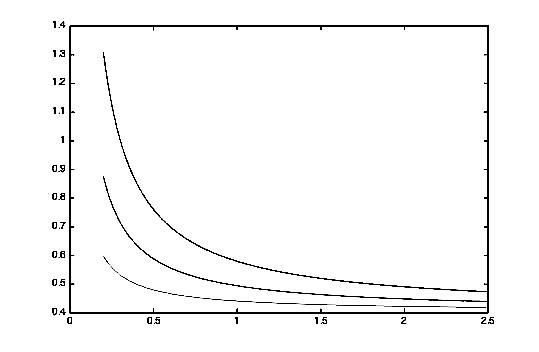}}\;
	\subfigure[]
	 {\includegraphics[scale=0.295]{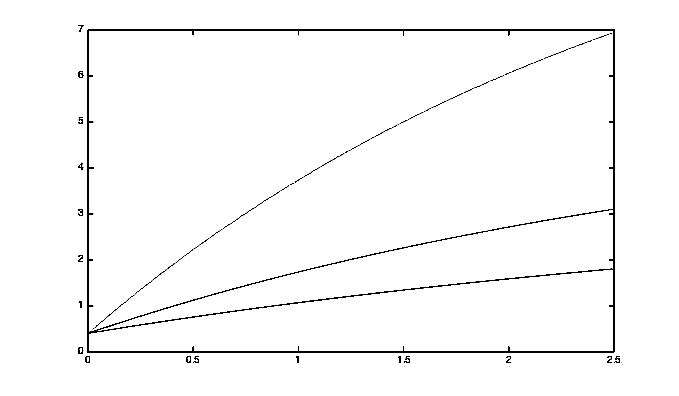}}
		\caption{The expectation $E_1=\EE[T(x)~|~\ep(0)=1]$ 
		in the case $a_0=-a_1=a$ and $\gamma_0=\gamma_1=\gamma,$
		as function of}
		\begin{description}
  \item[(a)] $x,\;1\leq x\leq3,$ with $\la_0=\la_1=1$  for $a=\gamma=1; 2.5; 5$ (from top to bottom);
  \item[(b)] $a,\;1\le a\le 10,$ with $x=2, \;\gamma=a, \la_0=\la_1=1$ for $x=1.5;\;2;\;2.5$ (from top to bottom);
  \item[(c)] $\la_0,\;0.2\le\la_0\leq2.5,$ with $x=2,\;a=\gamma=1$ for $\la_1=0.1;\;0.25;\;0.5$ (from bottom to top);
  \item[(d)]$\la_1,\;0\le\la_1\leq2.5,$ with $x=2,\;a=\gamma=1$ for $\la_0=0.1;\;0.25;\;0.5$ (from top to bottom)
\end{description}
	\label{fig:2}
\end{figure}

The subsequent moments, $\EE[T(x)^n~|~\ep(0)=i],\;n\geq2,\; i\in\{0, 1\},$ can be obtained by sequential differentiation. 

  Some plots are presented in Fig. \ref{fig:2}.

\begin{rem}
Let $X=X(t),$ starts from $x=X(0),\;x<a_1/\gamma_1,$ and 
\[
T_-(x)=\inf\{t~:~X(t)>a_1/\gamma_1\},\qquad x<a_1/\gamma_1,
\]
be the first passage time through the level $x=a_1/\gamma_1.$
The formulae for the expectations of $T_-(x)$  can be easily written by symmetry.
\end{rem}

\begin{rem}
Formulae \eqref{eq:q0q1=} are consistent with some simple reasonable results. 

Let $\la_0=0$.

If $\ep(0)=0,$ then $X(t)=\phi_0(x, t)\;\forall t>0,\;\text{a. s.}$
 and, hence, the process $X$ never crosses the level $a_0/\gamma_0.$ That is, $T(x)=+\infty$.

 If $\ep(0)=1,$ then the process $X=X(t)$ passes through $a_0/\gamma_0$ if and only if 
 there is no switching up to the time $t^*(x),$ \eqref{def:t*}. Therefore\textup{,} 
conditionally (under $\ep(0)=1$)
\[\begin{aligned}
\Big[T(x)=\begin{cases}
      t^*(x),& \text{with probability $\rme^{-\la_1t^*(x)}$ }, \\
      +\infty,& \text{with probability $1-\rme^{-\la_1t^*(x)}$}
\end{cases}~|~\ep(0)=1&\Big].
\end{aligned}\]
In this case,
\begin{equation}\label{eq:q1-0}
\widehat Q_0(q, x)\equiv0,\quad 
\widehat Q_1(q, x)=\exp(-qt^*(x))\cdot\rme^{-\la_1t^*(x)}
=\left(\frac{x-a_1/\gamma_1}{a_0/\gamma_0-a_1/\gamma_1}\right)^{-(\la_1+q)/\gamma_1}.
\end{equation}

The same result is given by  \eqref{eq:q0q1=}: 
if $\la_0=0,$ then, due to \eqref{eq:q0q1=}, we have 
$\widehat Q_0(q, x)\equiv0,$ and $b_0(q),\;b_1(q)$ 
coincide with $\beta_0=q/\la_0,\;\beta_1=(\la_1+q)/\gamma_1$.
Hence,
\[\begin{aligned}
\widehat Q_1(q, x)=&
1+\sum_{n=1}^\infty\frac{(\beta_1(q))_n}{n!}z(x)^n
=\left(1-z(x)\right)^{-\frac{\la_1+q}{\gamma_1}}
=\left(\frac{x-a_1/\gamma_1}{a_0/\gamma_0-a_1/\gamma_1}\right)^{-(\la_1+q)/\gamma_1},
\end{aligned}\]
which coincides with \eqref{eq:q1-0}.
 
Let $\la_1=0.$ 

If the particle begins to move from point $x,\;x>a_0/\gamma_0,$ 
according to the pattern $\phi_1(x, t),$ \eqref{eq:pattern1}, 
then it will  arrive without switching to $a_0/\gamma_0$ at time $t^*(x)$.
It means that
\begin{equation}\label{eq:Q1delta}
Q_1(t, x)=\delta(t-t^*(x)).
\end{equation}
Thus, by \eqref{def:laplaceq0q1} and  \eqref{def:t*}
\[\begin{aligned}
\widehat Q_1(q, x)=&\rme^{-qt^*(x)}
=\left(\frac{x-a_1/\gamma_1}{a_0/\gamma_0-a_1/\gamma_1}\right)^{-q/\gamma_1}
=\left(1-z(x)\right)^{-q/\gamma_1}\\
=&1+\sum_{n=1}^\infty\frac{(b_1)_n}{n!}z(x)^n
\end{aligned}\]
with $b_1=b_1(q)=q/\gamma_1.$ 

 This is repeated by \eqref{eq:q0q1=}
with $b_1=\beta_1=q/\gamma_1$ and $b_0=\beta_0=(\la_0+q)/\gamma_0.$

On the other hand, if the particle begins with the pattern $\phi_0(x, t),$ \eqref{eq:pattern0}, 
then it falls into $a_0/\gamma_0$ after a single switch (at time $\tau$) to the pattern $\phi_1$.  
This means that 
\begin{equation}
\label{eq:hatQ0la0}
\widehat Q_0(q, x)=\EE[\exp(-q\left(\tau+t^*(\phi_0(x, \tau))\right)].
\end{equation}
Since 
  \[\begin{aligned}
t^*(\phi_0(x, \tau))&
=\frac{1}{\gamma_1}\log\frac{ \phi_0(x, \tau)-a_1/\gamma_1}{a_0/\gamma_0-a_1/\gamma_1}
=\frac{1}{\gamma}\log\frac{a_0/\gamma_0-a_1/\gamma_1+(x-a_0/\gamma_0)\rme^{-\gamma_0\tau}}
{a_0/\gamma_0-a_1/\gamma_1}\\
&=\frac{1}{\gamma_1}\log\left(1-z(x)\rme^{-\gamma_0 \tau}\right), \qquad
z=\frac{a_0/\gamma_0- x}{a_0/\gamma_0-a_1/\gamma_1}<0,   
\end{aligned}    \]
equation \eqref{eq:hatQ0la0} becomes
\[
\widehat Q_0(q, x)=\int_0^\infty\la_0\rme^{-(\la_0+q)\tau}
\left(1-z(x)\rme^{-\gamma_0\tau}\right)^{-q/\gamma_1}\rmd\tau\]
\begin{equation}
\label{eq:hatQ0la01}
=\frac{\la_0}{\gamma_0}\int_0^1y^{-1+(\la_0+q)/\gamma_0}\left(1-z(x)y\right)^{-q/\gamma_1}\rmd y.
\end{equation}

Due to the integral representation of Gaussian hypergeometric function \cite[formula 9.111]{GR},
\begin{equation}
\label{eq:hatQ000}
\widehat Q_0(q, x)
=\frac{\la_0}{\la_0+q}F(q/\gamma_1, (\la_0+q)/\gamma_0;1+(\la_0+q)/\gamma_0; z(x)),
\end{equation}
which coincides with the first equation of \eqref{eq:q0q1=} (with $\la_1=0$).
\end{rem}

%%%%%%%%%%%%%%%%%%%%%%%%%%%%%%%%%
\section{The mean and variance of $X(t).$} \label{sec5}
\setcounter{equation}{0}

The marginal distribution of $X(t),$ \eqref{eq:solOU1}, 
can not be so easily written as the distribution of the Gaussian Ornstein-Uhlenbeck process.
In this section we give only a few hints on this matter.

Let $0=\tau_0<\tau_1<\ldots<\tau_n<\ldots$ be the sequence of switching times 
of the of the underlying Markov process $\ep.$
Let $N(t)$ corresponds to the number switchings 
till time $t,\;t>0,$
\[
N(t)=n,\qquad \text{if}\qquad \tau_n\le t<\tau_{n+1}.
\]
Recalling the distribution of the inhomogeneous Poisson process $N(t),$
see \cite[Theorem 2.1]{JAP51}, we have 
\begin{equation}
\label{eq:pijk}
\begin{aligned}
    \pi_{00}(s)=\PP_0\{N(s)\text{ is even}\}&
    =\rme^{-\la_0s}\Big[1+\Psi_0(s,\;(\la_0-\la_1)s)\Big],
    %\sum_{n=1}^\infty\frac{\la_0^n\la_1^n}{(2n)!}s^{2n}\Phi(n, 2n+1;(\la_0-\la_1)s)\right),  
    \\
       \pi_{11}(s)=\PP_1\{N(s)\text{ is even}\}&
       =\rme^{-\la_1s}\Big[1+\Psi_0(s,\;(\la_1-\la_0)s)\Big],
       %\left(1+\sum_{n=1}^\infty\frac{\la_0^n\la_1^n}{(2n)!}s^{2n}    \Phi(n, 2n+1;(\la_1-\la_0)s)\right), 
    \\
    \pi_{01}(s)=\PP_0\{N(s)\text{ is odd}\}&
    =\la_0\rme^{-\la_0s}\Psi_1(s,\;(\la_0-\la_1)s),
    %\sum_{n=1}^\infty\frac{\la_0^n\la_1^{n-1}}{(2n-1)!}s^{2n-1}    \Phi(n, 2n; (\la_0-\la_1)s), 
    \\
        \pi_{10}(s)=\PP_0\{N(s)\text{ is odd}\}&
        =\la_1\rme^{-\la_1s}\Psi_1(s,\;(\la_1-\la_0)s),
        %\sum_{n=1}^\infty\frac{\la_0^{n-1}\la_1^{n}}{(2n-1)!}s^{2n-1}
  %  \Phi(n, 2n; (\la_1-\la_0)s).
\end{aligned}
\end{equation}
where
\begin{equation}
\label{def:Psi}
\Psi_0(t, z)=\sum_{n=1}^\infty\frac{\la_0^n\la_1^n}{(2n)!}t^{2n}\Phi(n, 2n+1; z),\quad
\Psi_0(t, z)=\sum_{n=1}^\infty\frac{\la_0^{n-1}\la_1^{n-1}}{(2n-1)!}t^{2n-1}\Phi(n, 2n; z);
\end{equation}
$\Phi(\cdot, \cdot;\cdot)$ is the confluent hypergeometric function, \cite{Andrews}.

Due to representation \eqref{eq:solOU1}, the mean of $X(t)$ is given by
\begin{equation}
\label{eq:E0X}
\begin{aligned}
\EE_0&[X(t)]=   \EE_0\left[\rme^{-\Gamma(t)}\left(x+\int_0^t\rme^{\Gamma(s)}a_{\ep(s)}\rmd s\right)\right]
=x\psi_0^\Gamma(t)\\
+\int_0^t&
\left[a_0\pi_{00}(s)\EE\left(\rme^{-(\Gamma(t)-\Gamma(s))}~|~\ep(s)=0\right)
+a_1\pi_{01}(s)\EE\left(\rme^{-(\Gamma(t)-\Gamma(s))}~|~\ep(s)=1\right)
\right]\rmd s\\
 &=x \psi_0^\Gamma(t)+a_0\int_0^t\pi_{00}(s)\psi_0^\Gamma(t-s)\rmd s
+a_1\int_0^t\pi_{01}(s)\psi_1^\Gamma(t-s)\rmd s.
\end{aligned}
\end{equation}
Similarly,
\begin{equation}
\label{eq:E1X}
\EE_1[X(t)]=x \psi_1^\Gamma(t)+a_0\int_0^t\pi_{10}(s)\psi_0^\Gamma(t-s)\rmd s
+a_1\int_0^t\pi_{11}(s)\psi_1^\Gamma(t-s)\rmd s.
\end{equation}
Here $\pi_{ik}(\cdot)$ are determined by \eqref{eq:pijk}-\eqref{def:Psi}, and 
the moment generating functions,   
 \[
 \psi_k^\Gamma(t)=\EE_k[\exp(-\int_0^t\gamma_{\ep(s)}\rmd s)],\qquad k\in\{0, 1\},
 \]
 of the telegraph process $\Gamma(t)$ are also known, 
 \[
 \begin{aligned}
   \psi_0^\Gamma(t) &=\rme^{-(\la_0+\gamma_0)t} \left[1+\Psi_0(t,\;(\la_0-\la_1+\gamma_0-\gamma_1)t)
   +\la_0\Psi_1(t,\;(\la_0-\la_1+\gamma_0-\gamma_1)t)\right],  \\
   \psi_1^\Gamma(t) &=\rme^{-(\la_1+\gamma_1)t} \left[1+\Psi_0(t,\;(\la_1-\la_0+\gamma_1-\gamma_0)t)
   +\la_1\Psi_1(t,\; (\la_1-\la_0+\gamma_1-\gamma_0)t)\right], 
\end{aligned}
 \]
 see e.g. \cite[(2.21)]{JAP51}.
 
 %%%%%%%%%%%%%%%%%%%%%%%%%%%%%%%%%%%%%%%% 
 \begin{rem}
 \emph{In the symmetric case, $\la_0=\la_1=\la,\;\gamma_0=\gamma_1=\gamma$ 
 and $a_0=-a_1=a,$} formulae \eqref{eq:E0X}-\eqref{eq:E1X}
 can be simplified. 
 
 Since, $\psi_0^\Gamma(t)=\psi_1^\Gamma(t)=\rme^{-\gamma t}$
 and $\pi_{00}(s)=\pi_{11}(s)=(1+\rme^{-2\la s})/2,\;\pi_{01}(s)=\pi_{10}(s)=(1-\rme^{-2\la s})/2,$
by \eqref{eq:E0X}-\eqref{eq:E1X}
we have
 \begin{equation}
\label{eq:E0Xs}
\EE_0[X(t)]=x\rme^{-\gamma t}+a
\begin{cases}
     \dfrac{\rme^{-2\la t}-\rme^{-\gamma t}}{\gamma -2\la}, 
     & \text{ if $\gamma \neq2\la$}, \\ \\
 t\rme^{-\gamma t},     & \text{if $\gamma =2\la$},
 \end{cases}
\end{equation}
and
\begin{equation}
\label{eq:E1Xs}
\EE_1[X(t)]=x\rme^{-\gamma t}-a
\begin{cases}
     \dfrac{\rme^{-2\la t}-\rme^{-\gamma t}}{\gamma -2\la}, & \text{ if $\gamma \neq2\la$}, \\ \\
 t\rme^{-\gamma t},     & \text{if $\gamma =2\la$}.
\end{cases}
\end{equation}

Further, notice that in the symmetric case, 
\[\begin{aligned}
\EE_0[a_{\ep(s_1)}a_{\ep(s_2)}]=&\EE_1[a_{\ep(s_1)}a_{\ep(s_2)}]
=a^2\frac{1+\rme^{-2\la|s_1-s_2|}}{2}-a^2\frac{1-\rme^{-2\la|s_1-s_2|}}{2}\\
=&a^2\exp(-2\la|s_1-s_2|).
\end{aligned}\]
Hence, %the second moment can be simplified   
\[
\EE\left[\int_0^t\rme^{-\gamma(t-s)}\rmd\TT(t)\right]^2
=a^2\rme^{-2\gamma t}\int_0^t\int_0^t\exp(\gamma (s_1+s_2)-2\la|s_1-s_2|)\rmd s_1\rmd s_2
\]
\begin{equation}
\label{eq:EX2}
=\frac{a^2}{\gamma+2\la}
\begin{cases}
\dfrac{1}{\gamma}-\dfrac{2}{\gamma-2\la}\rme^{-(\gamma+2\la)t}+\dfrac{\gamma+2\la}{\gamma(\gamma-2\la)}\rme^{-2\gamma t},
& \text{if $\gamma \neq2\la$}, \\ \\
 \dfrac{1-\rme^{-2\gamma t}-2\gamma t\rme^{-2\gamma t}}{\gamma},     & \text{if $\gamma=2\la$},
\end{cases}
\end{equation}
which gives the expression for the variance of $X(t),$
\begin{equation}
\label{eq:VarX}
\begin{aligned}
\mathrm{Var}[X(t)]=&\EE\left[\int_0^t\rme^{-\gamma(t-s)}\rmd\TT(t)\right]^2-
\left(\EE\left[\int_0^t\rme^{-\gamma(t-s)}\rmd\TT(t)\right]\right)^2\\
=&a^2
\begin{cases}
     \dfrac{1}{\gamma(\gamma+2\la)}-\dfrac{\rme^{-2\gamma t}}{(\gamma-2\la)^2}
     \left[\rme^{2(\gamma-2\la)t}-\dfrac{8\la}{\gamma+2\la}\rme^{(\gamma-2\la)t}+\dfrac{2\la}{\gamma}\right], 
     & \text{if $\gamma\neq2\la$ }, \\ \\
  \dfrac{1}{2\gamma^2}\left[1-\rme^{-2\gamma t}\left(1+2\gamma t+2\gamma^2t^2\right)\right],
  & \text{if $\gamma=2\la$}.
\end{cases}
\end{aligned}\end{equation}

The limiting behaviour of $X(t)$ is consistent with known results.

As $t\to\infty,$ the limits  are given by
\[
\lim_{t\to\infty}\EE_0[X(t)]=\lim_{t\to\infty}\EE_1[X(t)]=0,
\qquad
\lim_{t\to\infty}\mathrm{Var}[X(t)]=\frac{a^2}{\gamma(\gamma+2\la)}.
\]
On the other hand, under Kac's scaling, $a,\;\la\to\infty,\;a^2/\la\to\sigma^2,$ 
the limits of the expectation 
\begin{equation}\label{eq:EX-kac}
\lim\EE[X(t)]=x\rme^{-\gamma t}\pm\lim a\frac{\rme^{-2\la t}-\rme^{-\gamma t}}{\gamma -2\la}=x\rme^{-\gamma t},
\end{equation}
see \eqref{eq:E0X}-\eqref{eq:E1X},
and the variance 
\begin{equation}\label{eq:varX-kac}
\begin{aligned}
\lim\mathrm{Var}[X(t)]=&\lim a^2\left\{\dfrac{1}{\gamma(\gamma+2\la)}-\dfrac{\rme^{-2\gamma t}}{(\gamma-2\la)^2}
     \left[\rme^{2(\gamma-2\la)t}-\dfrac{8\la}{\gamma+2\la}\rme^{(\gamma-2\la)t}+\dfrac{2\la}{\gamma}\right]\right\}\\
     =&\lim\frac{a^2}{\gamma(\gamma+2\la)}-\frac{\rme^{-2\gamma t}}{\gamma}\lim\frac{2a^2\la}{(\gamma-2\la)^2}
     =\frac{\sigma^2}{2\gamma}\left(1-\rme^{-2\gamma t}\right),
     \end{aligned}\end{equation}

Formulae \eqref{eq:EX-kac}-\eqref{eq:varX-kac}
coincide with the known results for the classical Ornstein-Uhlenbeck process, see e.g. \cite[(4)-(5)]{Maller}.
\end{rem}
%%%%%%%%%%%%%%%%%%%%%%%%%%%%%%%%%%%%%%%%%%%%%%%%%
%%%%%%%%%%%%%%%%%%%%%%%%%%%%%%%%%%%%%%%%%%%%%%%%%
\section{On the joint distribution of $X(t)$ and $N(t).$}\label{sec6}
\setcounter{equation}{0}

Due to technical difficulties, the distribution of the Ornstein-Uhlenbeck 
process with bounded variation cannot be presented explicitly. However, let's sketch it out.
 
Consider the Ornstein-Uhlenbeck process of bounded variation $X=X(t)$ 
 based on the completely symmetric telegraph process $\TT:$ 
 the velocities are $\pm a$ the switching intensities are identical, $\la_0=\la_1=\la$,
 and $\gamma_0=\gamma_1=\gamma$. 
 Let $f_i(y, t; n~|~x),\;n\ge0, i\in\{0, 1\},$
 be the density functions characterising the joint distribution of the particle position
$X(t)$ and the number of the patterns switchings $N(t),$ 
 \begin{equation*}
%\label{def:pi}
f_i(y, t; n~|~x)=\PP\{X(t)\in\rmd y,\;N(t)=n~|~X(0)=x,\;\ep(0)=i\}/\rmd y.
\end{equation*}

By definition, we have 
\begin{equation}
\label{eq:pi0}
f_0(y, t; 0~|~x)=\rme^{-\la t}\delta(y-\phi_0(x, t)),\quad f_1(y, t; 0~|~x)=\rme^{-\la t}\delta(y-\phi_1(x, t)),
\end{equation}
$\phi_0(x, t)=a/\gamma+(x-a/\gamma)\rme^{-\gamma t},\;
\phi_1(x, t)=-a/\gamma+(x+a/\gamma)\rme^{-\gamma t}$.
Further, 
by virtue of \eqref{eq:law01}-\eqref{eq:law10}, functions $f_0(y, t; n~|~x)$ and $f_1(y, t; n~|~x)$ 
satisfy the sequence of coupled integral equations, $n\geq1,$
\begin{align}
\label{eq:phi0}  
f_0(y, t; n~|~x)&=\la\int_0^t\rme^{-\la \tau}f_1(y, t-\tau; n-1~|~\phi_0(x,\tau))\rmd\tau,   \\
 \label{eq:phi1}  
f_1(y, t; n~|~x)&=\la\int_0^t\rme^{-\la \tau}f_0(y, t-\tau; n-1~|~\phi_1(x,\tau))\rmd\tau.
\end{align}

Due to the total symmetry of the underlying process $\TT$, we have the identity in law\textup{:}
\[
[X(t)~|~\ep(0)=0,\;X(0)=x]\stackrel{D}{=}[-X(t)~|~\ep(0)=1,\;X(0)=-x],\qquad t>0.
\]
Moreover, by induction, one can verify the following identities:
for all $n,\;n\geq0,$
\begin{equation}
\label{eq:p1=p0}
f_0(y, t; n~|~x)\equiv f_1(-y, t; n~|~-x),\qquad  t>0.
\end{equation}
Since $\phi_1(-x, t)\equiv -\phi_0(x, t),$ 
for $n=0$ \eqref{eq:p1=p0} follows by definition, see \eqref{eq:pi0}.  
Let  \eqref{eq:p1=p0} be proved for $n-1$. Equations \eqref{eq:phi0}-\eqref{eq:phi1} give 
\[
\begin{aligned}
    f_1(-y, t;n~|~-x)=&\la\int_0^t\rme^{-\la \tau}f_0(-y, t-\tau; n-1~|~\phi_1(-x,\tau))\rmd\tau   \\
                             =&\la\int_0^t\rme^{-\la \tau}f_0(-y, t-\tau; n-1~|~-\phi_0(x,\tau))\rmd\tau \\ 
                             =&\la\int_0^t\rme^{-\la \tau}f_1(y, t-\tau; n-1~|~\phi_0(x,\tau))\rmd\tau
                            = f_0(y, t; n~|~x),
\end{aligned} 
\]
 which proves the result \eqref{eq:p1=p0}.

In order to determine the explicit expressions 
of the density functions $f_0(y, t; n~|~x)$ and $f_1(y, t; n~|~x),$
consider first \eqref{eq:phi0}-\eqref{eq:phi1} with $n=1.$ By \eqref{eq:pi0} we have 
\begin{align}
\label{eq:p0n=1}
    f_0(y, t; 1~|~x)=&\la\rme^{-\la t}\int_0^t\delta(y-\phi_1(\phi_0(x, \tau), t-\tau))\rmd \tau,   \\
    \label{eq:p1n=1}
   f_1(y, t; 1~|~x)=&\la\rme^{-\la t}\int_0^t\delta(y-\phi_0(\phi_1(x, \tau), t-\tau))\rmd \tau.   
\end{align}

Notice that the equations
\begin{align}
\label{eq:tau0}
   y- \phi_1(\phi_0(x, \tau), t-\tau)=0,&   \\
    \label{eq:tau1}
  y-  \phi_0(\phi_1(x, \tau), t-\tau)=0,&  
\end{align}
have the solutions,  $\tau,\;0\leq\tau\leq t,$ if and only if $y\in I(x, t):=[\phi_1(x, t),\;\phi_0(x, t)],$ that is
\begin{equation}
\label{interval}
-\frac{a}{\gamma}+\left(x+\frac{a}{\gamma}\right)\rme^{-\gamma t}=\phi_1(x, t)\leq y\leq
 \phi_0(x, t)=\frac{a}{\gamma}+\left(x-\frac{a}{\gamma}\right)\rme^{-\gamma t}.
\end{equation}

Since 
\begin{align}\label{phi1phi0}
  \phi_1(\phi_0(x, \tau), t-\tau)\equiv& -\frac{a}{\gamma}
+\frac{2a}{\gamma}\rme^{-\gamma(t-\tau)}+\left(x-\frac{a}{\gamma}\right)\rme^{-\gamma t},   \\
\label{phi0]hi1} 
   \phi_0(\phi_1(x, \tau), t-\tau)\equiv& \frac{a}{\gamma}
-\frac{2a}{\gamma}\rme^{-\gamma(t-\tau)}+\left(x+\frac{a}{\gamma}\right)\rme^{-\gamma t},
\end{align}
 see \eqref{eq:pattern0}-\eqref{eq:pattern1},
 the solution of \eqref{eq:tau0}, $\tau=\tau_0(y, t~|~x),$ is given by
\begin{equation}
\label{sol:tau0}
\tau=\tau_0(y, t~|~x)=t+\frac{1}{\gamma}\log\frac{a+\gamma y+(a-\gamma x)\rme^{-\gamma t}}{2a}.
\end{equation}
Similarly, the solution of \eqref{eq:tau1}, $\tau=\tau_1(y, t~|~x)=\tau_0(-y, t~|~-x),$ is 
\begin{equation}
\label{sol:tau1}
\tau=\tau_1(y, t~|~x)=t+\frac{1}{\gamma}\log\frac{a-\gamma y+(a+\gamma x)\rme^{-\gamma t}}{2a}.
\end{equation}
Note that
\[\begin{aligned}
\rme^{-\gamma(t-\tau_0(y, t~|~x))}+\rme^{-\gamma(t-\tau_1(y, t~|~x))}\equiv&
\frac{a+\gamma y+(a-\gamma x)\rme^{-\gamma t}}{2a}+\frac{a-\gamma y+(a+\gamma x)\rme^{-\gamma t}}{2a}\\
\equiv&1+\rme^{-\gamma t}.
\end{aligned}\]

Due to equations \eqref{eq:p0n=1}-\eqref{eq:p1n=1}, \eqref{sol:tau0}-\eqref{sol:tau1} 
and the differential equalities 
\[\begin{aligned}
\rmd_\tau\left[y-\phi_1(\phi_0(x, \tau),\; t-\tau)\right]=&-2a\exp(-\gamma(t-\tau))\rmd\tau,\\
\rmd_\tau\left[y-\phi_0(\phi_1(x, \tau),\; t-\tau)\right]=&2a\exp(-\gamma(t-\tau))\rmd\tau,
\end{aligned}
\qquad 0<\tau<t,
\]
one can obtain the explicit expressions for the density functions
with a single velocity switching,
\[\begin{aligned}
%\label{sol:p0n-1}
    f_0(y, t; 1~|~x)&=\frac{\la\rme^{-\la t}\1_{\{y\in I(x, t)\}}}{a+\gamma y+(a-\gamma x)\rme^{-\gamma t}}
    =\frac{\la\rme^{-(\la-\gamma)t}\1_{\{y\in I(x, t)\}}}{2a}\rme^{-\gamma \tau_0(y, t~|~x)},  \\
 %   \label{sol:p1n-1}
    f_1(y, t; 1~|~x)&=\frac{\la\rme^{-\la t}\1_{\{y\in I(x, t)\}}}{a-\gamma y+(a+\gamma x)\rme^{-\gamma t}}
        =\frac{\la\rme^{-(\la-\gamma)t}\1_{\{y\in I(x, t)\}}}{2a}\rme^{-\gamma \tau_1(y, t~|~x)}.  
\end{aligned}\]

We continue to solve equations \eqref{eq:phi0}-\eqref{eq:phi1} using the following lemma.
\begin{lem}\label{lem}
Let $\tau_0(y, t~|~x)$ and $\tau_1(y, t~|~x)$ be defined by \eqref{sol:tau0}-\eqref{sol:tau1}. 
We have the following identities\textup{:}
\begin{equation}
\label{eq:tildetau2}
\begin{aligned}
\tau_0(y, t-\tau~|~\phi_0(x, \tau))=&\tau_0(y, t~|~x)-\tau,\\
\tau_1(y, t-\tau~|~\phi_1(x, \tau))=&\tau_1(y, t~|~x)-\tau,
\end{aligned}
\qquad
0<\tau<t;
\end{equation}
and
\begin{equation}
\label{eq:tildetau3}
\begin{aligned}
    \tau_0(y, t-\tau~|~\phi_1(x, \tau))=& t-\tau+\frac{1}{\gamma}
    \log\frac{a+\gamma y-(a+\gamma x)\rme^{-\gamma t}+2a\rme^{-\gamma t}\rme^{\gamma\tau}}{2a}, \\
     \tau_1(y, t-\tau~|~\phi_0(x, \tau))=&t-\tau+\frac{1}{\gamma}
    \log\frac{a-\gamma y-(a-\gamma x)\rme^{-\gamma t}+2a\rme^{-\gamma t}\rme^{\gamma\tau}}{2a},  
\end{aligned}
\qquad
0<\tau<t.
\end{equation}
Furthermore, 
\begin{equation}
\label{eq:tilde1}
\begin{aligned}
   y\in I(\phi_0(x, \tau), t-\tau) & \Leftrightarrow 0<\tau\leq\tau_0(y, t~|~x),\\
   y\in I(\phi_1(x, \tau), t-\tau) & \Leftrightarrow \tau_1(y, t~|~x)\leq\tau<t.
    \end{aligned}
\end{equation}
\end{lem}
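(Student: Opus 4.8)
The plan is to reduce every assertion of the lemma to elementary algebra of the exponentials defining $\phi_0,\phi_1$ in the symmetric case, exploiting two structural facts so as to prove only ``half'' of each claim. First, the semigroup property $\phi_i(\phi_i(x,\tau),t-\tau)=\phi_i(x,t)$, $i\in\{0,1\}$, recorded in Section~\ref{sec3}. Second, the reflection symmetry $\phi_1(-x,t)\equiv-\phi_0(x,t)$ together with the identity $\tau_1(y,s\mid z)=\tau_0(-y,s\mid -z)$ noted after \eqref{sol:tau0}; these let me deduce each ``$\tau_1$-version'' from the corresponding ``$\tau_0$-version'' by the substitution $(y,x)\mapsto(-y,-x)$, since also $-\phi_1(x,\tau)=\phi_0(-x,\tau)$.

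For \eqref{eq:tildetau2} I would start from the defining formula \eqref{sol:tau0},
\[
\tau_0(y,t-\tau\mid\phi_0(x,\tau))=(t-\tau)+\frac{1}{\gamma}\log\frac{a+\gamma y+(a-\gamma\phi_0(x,\tau))\rme^{-\gamma(t-\tau)}}{2a},
\]
and use that, by \eqref{eq:pattern0}, $a-\gamma\phi_0(x,\tau)=(a-\gamma x)\rme^{-\gamma\tau}$, so that $(a-\gamma\phi_0(x,\tau))\rme^{-\gamma(t-\tau)}=(a-\gamma x)\rme^{-\gamma t}$; the factor $\rme^{-\gamma(t-\tau)}$ collapses and the right-hand side becomes exactly $\tau_0(y,t\mid x)-\tau$. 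The second line of \eqref{eq:tildetau2} then follows by the $(y,x)\mapsto(-y,-x)$ reduction. For \eqref{eq:tildetau3} the mechanism is identical, only now with the ``cross'' composition: from \eqref{eq:pattern1}, $a-\gamma\phi_1(x,\tau)=2a-(a+\gamma x)\rme^{-\gamma\tau}$, hence $(a-\gamma\phi_1(x,\tau))\rme^{-\gamma(t-\tau)}=2a\rme^{-\gamma t}\rme^{\gamma\tau}-(a+\gamma x)\rme^{-\gamma t}$; substituting this into \eqref{sol:tau0} gives the first line of \eqref{eq:tildetau3}, and the reduction gives the second.

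For the membership equivalences \eqref{eq:tilde1} I would use the semigroup property to identify the endpoints of the reachable interval $I(\phi_0(x,\tau),t-\tau)=[\phi_1(\phi_0(x,\tau),t-\tau),\,\phi_0(x,t)]$: its upper endpoint does not depend on $\tau$, while its lower endpoint $\tau\mapsto\phi_1(\phi_0(x,\tau),t-\tau)$ is, by the explicit formula \eqref{phi1phi0}, strictly monotone on $[0,t]$, sweeping out $[\phi_1(x,t),\phi_0(x,t)]$. Since by \eqref{eq:tau0} and \eqref{sol:tau0} this lower endpoint equals $y$ exactly at $\tau=\tau_0(y,t\mid x)$, for $y\in I(x,t)$ the inclusion $y\in I(\phi_0(x,\tau),t-\tau)$ reduces to comparing the monotone lower endpoint with $y$, which gives the stated constraint $0<\tau\le\tau_0(y,t\mid x)$. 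The second equivalence is obtained the same way from $I(\phi_1(x,\tau),t-\tau)=[\phi_1(x,t),\,\phi_0(\phi_1(x,\tau),t-\tau)]$, whose free (upper) endpoint $\tau\mapsto\phi_0(\phi_1(x,\tau),t-\tau)$ is strictly monotone by \eqref{phi0]hi1} and meets $y$ at $\tau=\tau_1(y,t\mid x)$, yielding the corresponding constraint on $\tau$.

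The argument involves no serious difficulty — the content is bookkeeping. The points that require care are the signs and the opposite $\pm a/\gamma$ offsets distinguishing $\phi_0$ from $\phi_1$, and, in the last step, the direction of monotonicity of the free endpoint (increasing for $\phi_1\circ\phi_0$, decreasing for $\phi_0\circ\phi_1$) together with the behaviour at $\tau=0$ and $\tau=t$, which is exactly where the $\delta$-masses in \eqref{eq:p0n=1}--\eqref{eq:p1n=1} sit and where the endpoint conventions in \eqref{eq:tilde1} must be read.
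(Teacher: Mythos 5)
Your method is the same as the paper's: the identities \eqref{eq:tildetau2}--\eqref{eq:tildetau3} are checked by substituting \eqref{eq:pattern0}--\eqref{eq:pattern1} into \eqref{sol:tau0} and using $(a-\gamma\phi_0(x,\tau))\rme^{-\gamma(t-\tau)}=(a-\gamma x)\rme^{-\gamma t}$, and the membership equivalences come from the semigroup property (one endpoint of the reachable interval is frozen at $\phi_0(x,t)$ or $\phi_1(x,t)$) plus monotonicity of the free endpoint. Your explicit use of the reflection $\tau_1(y,s\mid z)=\tau_0(-y,s\mid -z)$ together with $\phi_1(-x,t)=-\phi_0(x,t)$ to halve the work is a clean systematization of what the paper dismisses as ``similarly,'' and your computations for \eqref{eq:tildetau2}--\eqref{eq:tildetau3} are correct.

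The gap is in the last step, and it is exactly the point you flag as ``requiring care'' without actually resolving. For the second line of \eqref{eq:tilde1} you correctly identify that $I(\phi_1(x,\tau),t-\tau)=[\phi_1(x,t),\,\phi_0(\phi_1(x,\tau),t-\tau)]$ and that the free upper endpoint is strictly \emph{decreasing} in $\tau$ (from $\phi_0(x,t)$ at $\tau=0$ down to $\phi_1(x,t)$ at $\tau=t$), meeting $y$ at $\tau=\tau_1(y,t\mid x)$. But the condition $y\le\phi_0(\phi_1(x,\tau),t-\tau)$ with a decreasing endpoint gives $0<\tau\le\tau_1(y,t\mid x)$ --- the \emph{reverse} of the constraint $\tau_1(y,t\mid x)\le\tau<t$ asserted in the lemma. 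Writing ``yielding the corresponding constraint'' conceals this sign. In fact the asserted inequality appears to be a slip in the paper itself: the closed-form expression for $f_1(y,t;2\mid x)$ obtained after \eqref{eq:phi=2} is what one gets by integrating over $(0,\tau_1(y,t\mid x))$, not over $(\tau_1(y,t\mid x),t)$, and only the reversed constraint is compatible with the symmetry \eqref{eq:p1=p0}. So your method is sound, but as written the proposal neither derives the stated inequality nor notices that an honest execution of your own argument contradicts it; you need to state the resulting constraint explicitly and reconcile it with the lemma.
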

\begin{proof}
Equalities \eqref{eq:tildetau2}-\eqref{eq:tilde1} can be verified directly by definition.
For instance, by \eqref{sol:tau0} and \eqref{eq:pattern0}-\eqref{eq:pattern1} one can obtain the first identities 
of \eqref{eq:tildetau2} and \eqref{eq:tildetau3}:
\[\begin{aligned}
\tau_0(y, t-\tau~|~\phi_0(x, \tau))=&t-\tau
+\frac{1}{\gamma}\log\frac{a+\gamma y+(a-\gamma\phi_0(x, \tau))\rme^{-\gamma(t-\tau)}}{2a}\\
=&t-\tau
+\frac{1}{\gamma}\log\frac{a+\gamma y+(a-\gamma x)\rme^{-\gamma t}}{2a}\\
=&t-\tau+\tau_0(y, t~|~x)-t=\tau_0(y, t~|~x)-\tau
\end{aligned}\]
and
\[\begin{aligned}
\tau_0(y, t-\tau~|~\phi_1(x, \tau))=&t-\tau
+\frac{1}{\gamma}\log\frac{a+\gamma y+(a-\gamma\phi_1(x, \tau))\rme^{-\gamma(t-\tau)}}{2a}\\
=&t-\tau
+\frac{1}{\gamma}\log\frac{a+\gamma y-(a+\gamma x)\rme^{-\gamma t}+2a\rme^{-\gamma t}\rme^{\gamma\tau}}{2a}.
\end{aligned}\]

Further, $y\in I(\phi_0(x, \tau), t-\tau)$ is equivalent to 
\[\phi_1(\phi_0(x, \tau), t-\tau)\leq y\leq\phi_0(\phi_0(x, \tau), t-\tau)\equiv\phi_0(x, t),\]
see \eqref{interval}. 
 Function $\tau\to\phi_1(\phi_0(x, \tau), t-\tau),$ see \eqref{phi1phi0}, increases.  
Hence, $y\in I(\phi_0(x, \tau), t-\tau)$ is equivalent to $0<\tau<\tau_0(y, t~|~x)$.
Other equalities of the lemma are verified similarly.
 $\hfill\Box$\end{proof} 

Due to Lemma \ref{lem} equations \eqref{eq:phi0}-\eqref{eq:phi1} give
\begin{equation}
\label{eq:phi=2}
\begin{aligned}
    f_0(y, t; 2~|~x)=&\la^2\rme^{-\la t}\int_0^{\tau_0(y, t~|~x)}
    \frac{\rmd\tau}{a-\gamma y+(\gamma x-a)\rme^{-\gamma t}+2a\rme^{-\gamma t}\rme^{\gamma\tau}},   \\
    f_1(y, t; 2~|~x)=&\la^2\rme^{-\la t}\int_{\tau_1(y, t~|~x)}^t
    \frac{\rmd\tau}{a+\gamma y-(\gamma x+a)\rme^{-\gamma t}+2a\rme^{-\gamma t}\rme^{\gamma\tau}},   
\end{aligned}
\end{equation}

Since
\[
\int_a^b\frac{\rmd\tau}{A+B\rme^{\gamma\tau}}=\frac{1}{\gamma A}
\log\left[\frac{A+B\rme^{\gamma a}}{A+B\rme^{\gamma b}}\rme^{\gamma(b-a)}\right],
\]
integrating in \eqref{eq:phi=2}, we get
\[
\begin{aligned}
    f_0(y, t; 2~|~x)=&  \la^2\rme^{-\la t}\frac{\tau_0(y, t~|~x)+\tau_1(y, t~|~x)-t}{a-\gamma y+(\gamma x-a)\rme^{-\gamma t}}, \\
    f_1(y, t; 2~|~x)=&  \la^2\rme^{-\la t}\frac{\tau_0(y, t~|~x)+\tau_1(y, t~|~x)-t}{a+\gamma y-(\gamma x+a)\rme^{-\gamma t}},
\end{aligned}
\]
where $\tau_0(y, t~|~x)$ and $\tau_1(y, t~|~x)$ are determined by \eqref{sol:tau0}-\eqref{sol:tau1}.

Applying Lemma \ref{lem} successively,
 one can obtain a sequence of the formulae for $f_i(\cdot, \cdot; n~|~\cdot),$ 
 which look more and more sophisticated.

\renewcommand{\theequation}{A.\arabic{equation}}
\renewcommand{\thetheo}{A.\arabic{theo}}

\section*{Appendix: the telegraph process}\label{sec2}
\setcounter{equation}{0}
\setcounter{theo}{0}

Let $(\Omega,\;\mathcal F,\;\mathcal F_t,\;\PP)$ be the complete filtered probability space.
Consider the adapted telegraph process $\TT(t),\;t\geq0,$  
with two alternating symmetric velocities $a$ and $-a,\;$ $a>0,$ 
switching with positive intensities $\la_0$ and $\la_1$.  

The joint distribution of $\TT(t)$ and $\ep(t)$ 
can be expressed by means of the (generalised) density functions
\begin{equation*}
%\label{def:pij}
p_{i}^j(x, t):=\PP\{\TT(t)\in\rmd x,\;\ep(t)=j~|~\ep(0)=i\}/\rmd x,\qquad i, j\in\{0, 1\},\quad t>0.
\end{equation*}
The following formulae seem to be generally known, 
 but for the best of my belief, they have never been published.
%%%%%%%%%%%%%%%%%%%%%%%%%%%%%%%%%%%%%%
%%%%%%%%%%%%%%%%%%%%%%%%%%%%%%%%%%%%%%
\begin{theo}\label{theo:distXep}
The density functions $p_{i}^j(x, t),\;i, j\in\{0, 1\},$ are given by
\begin{equation}
\label{eq:distXep}
\begin{aligned}
    p_{0}^0(x, t)&=\rme^{-\la_0t}\delta(x-a_0t)
+\frac{\sqrt{\la_0\la_1}}{a_0-a_1}\sqrt{\frac{\xi}{t-\xi}}
\rme^{-\la_0\xi-\la_1(t-\xi)}I_1(2\sqrt{\la_0\la_1\xi(t-\xi)})\1_{\{0<\xi<t\}},   \\
    p_{1}^1(x, t)&=  \rme^{-\la_1t}\delta(x-a_1t)
    +\frac{\sqrt{\la_0\la_1}}{a_0-a_1}\sqrt{\frac{t-\xi}{\xi}}\rme^{-\la_0\xi-\la_1(t-\xi)}I_1(2\sqrt{\la_0\la_1\xi(t-\xi)})\1_{\{0<\xi<t\}},\\
    p_{0}^1(x, t)&=\frac{\la_0}{a_0-a_1}\rme^{-\la_0\xi-\la_1(t-\xi)}I_0(2\sqrt{\la_0\la_1\xi(t-\xi)})\1_{\{0<\xi<t\}},\\
    p_{1}^0(x, t)&=\frac{\la_1}{a_0-a_1}\rme^{-\la_0\xi-\la_1(t-\xi)}I_0(2\sqrt{\la_0\la_1\xi(t-\xi)})\1_{\{0<\xi<t\}},
\end{aligned}
\end{equation}
where $\xi=(x-a_1t)/(a_0-a_1),\;t-\xi=(a_0t-x)/(a_0-a_1),\;a_1t<x<a_0t.$

Here $I_0$ and $I_1$ are the modified Bessel functions\textup{,}
\[
I_0(z)=1+\sum_{n=1}^\infty\frac{(z/2)^{2n}}{(n!)^2},\qquad
I_1(z)=I_0'(z)=\sum_{n=1}^\infty\frac{(z/2)^{2n-1}}{(n-1)!n!}.
\]
\end{theo}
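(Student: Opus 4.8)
The plan is to derive the densities by solving the standard system of coupled integral equations for the telegraph process, obtained by conditioning on the first switching time, and then to recognise the resulting series as Bessel functions. First I would set up the renewal equations: conditioning on whether a switch occurs before time $t$ and on the time $\tau$ of the first switch, the functions $p_i^j(x,t)$ satisfy
\[
\begin{aligned}
p_0^0(x,t)&=\rme^{-\la_0 t}\delta(x-a_0 t)+\int_0^t\la_0\rme^{-\la_0\tau}\,p_1^0(x-a_0\tau,\,t-\tau)\,\rmd\tau,\\
p_0^1(x,t)&=\int_0^t\la_0\rme^{-\la_0\tau}\,p_1^1(x-a_0\tau,\,t-\tau)\,\rmd\tau,
\end{aligned}
\]
together with the two symmetric equations obtained by exchanging the roles of $0$ and $1$. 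Here I use the translation property: starting in state $0$, during $[0,\tau]$ the position moves deterministically by $a_0\tau$, so the post-switch process is a telegraph process started at $x-a_0\tau$ with initial state $1$, run for time $t-\tau$.

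The second step is to pass to the natural coordinate $\xi=(x-a_1 t)/(a_0-a_1)$, which is the ``time spent in regime $0$'': if the particle is at $x$ at time $t$ then it has spent time $\xi$ moving with velocity $a_0$ and time $t-\xi$ with velocity $a_1$. Under this change of variables the delta terms become $\rme^{-\la_0 t}\delta(x-a_0 t)$ (i.e. $\xi=t$) and $\rme^{-\la_1 t}\delta(x-a_1 t)$ (i.e. $\xi=0$), and the convolution integrals turn into genuine convolutions in the $\xi$-variable with exponential kernels. Writing $p_0^1(x,t)=\frac{1}{a_0-a_1}g(\xi,t-\xi)$ and similarly for the others, the system reduces to scalar integral equations of the form $g=\text{(exponential)}+\la_0\la_1\,(\text{exponential})*(\text{exponential})*g$, which can be solved by iterating (Neumann/Picard series): the $n$-th term is an $n$-fold convolution producing $\frac{(\la_0\la_1)^n\,\xi^n(t-\xi)^n}{(n!)^2}$-type coefficients against $\rme^{-\la_0\xi-\la_1(t-\xi)}$. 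Summing the series gives exactly $I_0(2\sqrt{\la_0\la_1\xi(t-\xi)})$ for the off-diagonal densities; for the diagonal ones the convolution structure shifts one index, yielding $\sqrt{\xi/(t-\xi)}$ (or its reciprocal) times $I_1(2\sqrt{\la_0\la_1\xi(t-\xi)})$. The factors $\la_0$, $\la_1$, $\sqrt{\la_0\la_1}$ in front track which regime the process is leaving at the final conditioning step.

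An alternative and perhaps cleaner second step is to verify \eqref{eq:distXep} directly: substitute the claimed formulae into the telegraph PDE system (the forward Kolmogorov equations $\pd_t p_i^0+a_0\pd_x p_i^0=-\la_0 p_i^0+\la_1 p_i^1$ and its companion), using the Bessel identities $I_0'=I_1$, $I_1'(z)=I_0(z)-I_1(z)/z$, and check the initial conditions $p_i^j(x,0)=\delta_{ij}\delta(x)$ (the delta terms carry the singular part, the Bessel terms the absolutely continuous part). This reduces the proof to a bounded computation with Bessel-function derivatives plus a check that the jump of the $\delta$-coefficient across $\xi=0$ and $\xi=t$ matches the source terms in the integral equations.

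The main obstacle will be bookkeeping rather than conceptual: correctly tracking the two different ``diagonal'' prefactors $\sqrt{\xi/(t-\xi)}$ versus $\sqrt{(t-\xi)/\xi}$ and the asymmetry between $p_0^0$ and $p_1^1$, and handling the distributional (Dirac) contributions consistently when they sit at the endpoints of the convolution interval — in particular making sure the convolution of a $\delta$ at $\xi=t$ with the exponential kernel reproduces the correct boundary behaviour of the Bessel terms as $\xi\uparrow t$. The convergence and term-by-term summation of the Neumann series on the compact $\xi$-interval is routine since the kernels are bounded.
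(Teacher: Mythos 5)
Your proposal is correct and follows essentially the same route as the paper: the paper decomposes $p_i^j$ over the number of switchings $N(t)$, cites the known per-$n$ joint densities from Kolesnik--Ratanov \cite[(4.1.10)--(4.1.11)]{KR}, and resums the series into $I_0$ and $I_1$, which is exactly what your Neumann-series iteration of the renewal equations produces term by term (the $n$-th iterate being the $N(t)=n$ contribution). The only difference is that you derive those per-switch-count terms from scratch rather than citing them, and you add an optional direct PDE verification; both are sound.
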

%%%%%%%%%%%%%%%%%%%%%%%%%%%%%%
\begin{proof}
Let $N(t)$ be the number of velocity switchings in the time interval $[0, t)$.

By virtue of \cite[(4.1.10)-(4.1.11)]{KR}, $p_{0}^0,$ can be represented as 
\begin{equation*}
%\label{eq:pij}
\begin{aligned}
p_{0}^0(x, t)
=&\sum_{n=0}^\infty\PP\{\TT(t)\in\rmd x,\;N(t)=2n~|~\ep(0)=0\}/\rmd x\\
=\rme^{-\la_0t}\delta(x-a_0t)&
+\frac{\exp(-\la_0\xi-\la_1(t-\xi))}{a_0-a_1}
\sum_{n=1}^\infty\frac{\la_0^n\la_1^n}{(n-1)!n!}\xi^n(t-\xi)^{n-1}\1_{\{0<\xi<t\}}\\
=&\rme^{-\la_0t}\delta(x-a_0t)
+\frac{\sqrt{\la_0\la_1}}{a_0-a_1}\sqrt{\frac{\xi}{t-\xi}}I_1(2\sqrt{\la_0\la_1\xi(t-\xi)})\1_{\{0<\xi<t\}}.
\end{aligned}\end{equation*}
see  \cite[formula 8.445]{GR}. 
The remaining equalities of \eqref{eq:distXep} are obtained in the same manner.
 $\hfill\Box$\end{proof} 

The well-known formulae for the (conditional) distribution of $\TT(t)$ follow from \eqref{eq:distXep}:
\[
\begin{aligned}
 p_0(x, t)= \PP\{\TT(t)\in\rmd x~|~\ep(0)=0\}/\rmd x  &= p_0^0(x, t)+p_0^1(x, t),  \\
     p_1(x, t)=   \PP\{\TT(t)\in\rmd x~|~\ep(0)=1\}/\rmd x  & = p_1^0(x, t)+p_1^1(x, t),  
\end{aligned}
\]
 cf 
 \cite{Luisa}, \cite{JAP51} or see in the book by Kolesnik and Ratanov, \cite[(4.1.15)]{KR}.
 
 Similarly, $f(x, t)=p_0^0(x, t)+p_1^0(x, t)$ (and $b(x, t)=p_0^1(x, t)+p_1^1(x, t)$) 
 are the distribution density functions of the moving forward (and backward) particles, cf \cite{O95},
 where these formulae were presented in the symmetric case, $\la_0=\la_1.$

The rest of this section is devoted to a description of the first 
and the second moments of $\TT(t)$ and 
$\TT(t)\1_{\{\ep(t)=j\}},\;j\in\{0, 1\}.$

We will use the following notations
 \[
\EE_i[g(\TT(t))]=\EE[g(\TT(t))~|~\ep(0)=i]=\int_{-\infty}^\infty g(x)p_i(x, t)\rmd x
\]
and
\[
\EE_i^j[g(\TT(t))]=\EE[g(\TT(t))\cdot\1_{\{\ep(t)=j\}}~|~\ep(0)=i]=\int_{-\infty}^\infty g(x)p_i^j(x, t)\rmd x,
\qquad i, j\in\{0, 1\}.
 \] 

%%%%%%%%%%%%%%%%%%%%%%%%%%%%%%%%%%
%%%%%%%%%%%%%%%%%%%%%%%%%%%%%%%%%%
\begin{theo}\label{theo:ET}
Let $a_0=-a_1=a>0.$

For $t\geq0$ 
\begin{align}
\label{eq:E00T}
    \EE_0^0\TT(t)&=a\rme^{-\la_0t}\sum_{n=0}^\infty\frac{\la_0^n\la_1^n}{(2n)!}t^{2n+1}G_n^{(1)}(t),\\
\label{eq:E01T}
      \EE_0^1\TT(t)
   & =a\rme^{-\la_0t}\sum_{n=0}^\infty\frac{\la_0^{n+1}\la_1^n}{(2n+1)!}t^{2n+2}H_n^{(1)}(t),\\
     \label{eq:E10T}
     \EE_1^0\TT(t)& 
     =-a\rme^{-\la_1t}\sum_{n=0}^\infty\frac{\la_0^{n}\la_1^{n+1}}{(2n+1)!}t^{2n+2}H_n^{(1)}(-t)\\
    \label{eq:E11T}
     \EE_1^1\TT(t)&=-a\rme^{-\la_1t}\sum_{n=0}^\infty\frac{\la_0^n\la_1^n}{(2n)!}t^{2n+1}G_n^{(1)}(-t),
\end{align}
and
\begin{align}
\label{eq:E00T2}
\EE_0^0&\TT(t)^2=a^2\exp(-\la_0t)
\sum_{n=0}^\infty\frac{\la_0^n\la_1^n}{(2n)!}t^{2n+2}G_n^{(2)}(t),\\
\label{eq:E01T2}
\EE_0^1&\TT(t)^2=a^2\exp(-\la_0t)
\sum_{n=0}^\infty\frac{\la_0^{n+1}\la_1^n}{(2n+1)!}t^{2n+3}H_n^{(2)}(t),\\
\label{eq:E10T2}
\EE_1^0&\TT(t)^2=a^2\exp(-\la_1t)
\sum_{n=0}^\infty\frac{\la_0^{n}\la_1^{n+1}}{(2n+1)!}t^{2n+3}H_n^{(2)}(-t),\\
\label{eq:E11T2}
\EE_1^1&\TT(t)^2=a^2\exp(-\la_1t)
\sum_{n=0}^\infty\frac{\la_0^n\la_1^n}{(2n)!}t^{2n+2}G_n^{(2)}(-t),
\end{align} 
where
\begin{equation}
\label{def:G1}
G_n^{(1)}(t)=-\frac{2n}{2n+1}\Phi(n+1, 2n+2;  2\beta t)+\Phi(n, 2n+1;  2\beta t),
\end{equation}
\begin{equation}
\label{def:H1}
H_n^{(1)}(t)=-\Phi(n+2, 2n+3; 2\beta t)+\Phi(n+1, 2n+2; 2\beta t)
\end{equation}
and
\begin{equation}
\label{def:G2}
G_n^{(2)}(t)=\frac{2n}{2n+1}\Phi(n+2, 2n+3; 2\beta t)
-\frac{4n}{2n+1}\Phi(n+1, 2n+2; 2\beta t)+\Phi(n, 2n+1; 2\beta t),
\end{equation}
\begin{equation}
\label{def:H2}
H_n^{(2)}(t)=\frac{2n+4}{2n+3}\Phi(n+3, 2n+4; 2\beta t)
-2\Phi(n+2, 2n+3; 2\beta t)+\Phi(n+1, 2n+2; 2\beta t),
\end{equation}
 $2\beta=\la_0-\la_1.$
 
 Here
$\Phi(a, b; z)$ denotes the confluent hypergeometric function,
\[
\Phi(\alpha, \beta; z):=\sum_{n=0}^\infty\frac{(\alpha)_n}{(\beta)_n}\frac{z^n}{n!},
\] 
$(\cdot)_n$ is the Pochhammer symbol\textup{;} 
$(\gamma)_n=\gamma(\gamma+1)\ldots(\gamma+n-1),\;n\geq1,\;(\gamma)_0=1.$
\end{theo}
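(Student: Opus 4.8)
The plan is to compute all eight moments directly from the explicit densities \eqref{eq:distXep} supplied by Theorem \ref{theo:distXep}. First I would set $2\beta=\la_0-\la_1$ and, using $a_0=-a_1=a$, pass to the variable $\xi=(x+at)/(2a)$, so that $x=a(2\xi-t)$, $\rmd x=2a\,\rmd\xi$, and $\xi$ runs over $(0,t)$ as $x$ runs over $(-at,at)$. For each $i,j\in\{0,1\}$ and $m\in\{1,2\}$ the moment $\EE_i^j[\TT(t)^m]=\int x^m p_i^j(x,t)\,\rmd x$ splits into an atomic part (present only when $i=j$) and the integral of the continuous part of $p_i^j$. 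The atomic term of $p_i^i$ contributes $a_i^m\rme^{-\la_it}t^m$, which I expect to coincide with the $n=0$ summand on the corresponding right-hand side: indeed $\Phi(0,\gamma;z)\equiv1$ forces $G_0^{(1)}=G_0^{(2)}\equiv1$ in \eqref{def:G1}, \eqref{def:G2}. In the $I_0$-series cases $\EE_0^1,\EE_1^0$ there is no atom and the $n=0$ term already sits inside the continuous part.

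Next I would expand $I_0(2\sqrt{\la_0\la_1\xi(t-\xi)})$ and $I_1(2\sqrt{\la_0\la_1\xi(t-\xi)})$ into their defining power series and absorb the square roots in front of $I_1$, exactly as in the proof of Theorem \ref{theo:distXep}. The continuous part of $p_i^j(x,t)$ then equals $\rme^{-\la_0\xi-\la_1(t-\xi)}$ times a series $\sum_n c_n\,\xi^{a}(t-\xi)^{b}$ with non-negative integer exponents, the base monomial being $\xi^n(t-\xi)^{n-1}$ for $p_0^0$, $\xi^{n-1}(t-\xi)^n$ for $p_1^1$, and $\xi^n(t-\xi)^n$ for $p_0^1,p_1^0$. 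Writing $-\la_0\xi-\la_1(t-\xi)=-\la_0t+2\beta(t-\xi)=-\la_1t-2\beta\xi$, I would pull out $\rme^{-\la_0t}$ when $i=0$ and $\rme^{-\la_1t}$ when $i=1$, leaving inside the integral the factor $\rme^{2\beta(t-\xi)}$, resp.\ $\rme^{-2\beta\xi}$; the interchange of sum and integral is legitimate since the Bessel series have non-negative terms and $|x|^m$ is bounded on $(-at,at)$.

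The heart of the computation: after substituting $u=t-\xi$ when $i=0$ and keeping $v=\xi$ when $i=1$, every summand reduces to $t^{m-k}\int_0^t v^{\alpha-1}(t-v)^{\gamma-\alpha-1}\rme^{cv}\,\rmd v$ with $c=\pm2\beta$, where $(2\xi-t)^m=\pm(2v-t)^m$ has been expanded by the binomial theorem \emph{as a polynomial in $v$}, not in $t-v$. I would then invoke the Euler integral for the confluent hypergeometric function,
\[
\int_0^t v^{\alpha-1}(t-v)^{\gamma-\alpha-1}\rme^{cv}\,\rmd v
=t^{\gamma-1}\,\frac{\Gamma(\alpha)\Gamma(\gamma-\alpha)}{\Gamma(\gamma)}\,\Phi(\alpha,\gamma;ct),\qquad \gamma>\alpha>0
\]
(cf.\ \cite[9.111]{GR}): the base monomials $\xi^n(t-\xi)^{n-1}$ and $\xi^{n-1}(t-\xi)^n$ generate $\Phi\bigl(n+k,2n+k+1;\pm2\beta t\bigr)$ for $k=0,\dots,m$, while $\xi^n(t-\xi)^n$ generates $\Phi\bigl(n+k+1,2n+k+2;\pm2\beta t\bigr)$, the sign being $+$ for $i=0$ and $-$ for $i=1$. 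Summing over $k$ and simplifying the ratios of Gamma values --- each of which collapses, after cancellation, to an elementary rational function of $n$ such as $\tfrac{2n}{2n+1}$, $\tfrac{4n}{2n+1}$, $2$, $\tfrac{2n+4}{2n+3}$, $1$ --- should turn the $m=1$ coefficients into $G_n^{(1)}$, $H_n^{(1)}$ and the $m=2$ coefficients into $G_n^{(2)}$, $H_n^{(2)}$, yielding \eqref{eq:E00T}, \eqref{eq:E01T}, \eqref{eq:E00T2}, \eqref{eq:E01T2}. For $i=1$ the argument becomes $-t$ (reflection $2\beta t\mapsto-2\beta t$) and an overall sign flip from $a_1=-a$ appears, so that \eqref{eq:E10T}, \eqref{eq:E11T}, \eqref{eq:E10T2}, \eqref{eq:E11T2} come out by the same route, or alternatively from the $0\leftrightarrow1$ symmetry of the telegraph process under $\la_0\leftrightarrow\la_1$, $a\mapsto-a$.

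The obstacle I anticipate is purely combinatorial: one must organise the expansion of $(2\xi-t)^m$ so that the exponent of $t-\xi$ carried over from the Bessel series is never increased (equivalently, $\gamma-\alpha$ stays fixed as $k$ varies), which is precisely what confines the hypergeometric parameters to the narrow window $\bigl(n+k,\,2n+k+1\bigr)$, $k\le m$, that occurs in \eqref{def:G1}--\eqref{def:H2}. Once that bookkeeping is arranged correctly, the remaining collection of binomial coefficients and simplification of Pochhammer symbols is routine.
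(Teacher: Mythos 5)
Your plan is correct, and I verified the key computations go through (e.g.\ for $\EE_0^0\TT(t)$ the two Euler integrals produce exactly the coefficients $1$ and $-\tfrac{2n}{2n+1}$ of $G_n^{(1)}$, and for $\EE_0^0\TT(t)^2$ the three terms of $(t-2u)^2$ produce $1$, $-\tfrac{4n}{2n+1}$, $\tfrac{2n}{2n+1}$ as in $G_n^{(2)}$), but it is a genuinely different route from the paper's. The paper does not integrate the densities of Theorem \ref{theo:distXep} at all: it takes the explicit formulae for the partial moment generating functions $\psi_i(z,t;n)=\EE_i[\exp(z\TT(t))\1_{\{N(t)=n\}}]$ from \cite[Theorem 2.1]{JAP51}, which are already expressed through $\Phi\left(\cdot,\cdot;2(\beta\mp az)t\right)$, sums them over even/odd $n$ to get $\psi_i^j$, and differentiates once or twice in $z$ at $z=0$ using the contiguous relations $\Phi'(\alpha,\beta;z)=\tfrac{\alpha}{\beta}\Phi(\alpha+1,\beta+1;z)$ and $\Phi''(\alpha,\beta;z)=\tfrac{\alpha(\alpha+1)}{\beta(\beta+1)}\Phi(\alpha+2,\beta+2;z)$; the cases $i=1$ then follow by the symmetry $a\to-a$, $\la_0\leftrightarrow\la_1$. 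What the paper's route buys is brevity: the shift of the $\Phi$-parameters and the rational prefactors $\tfrac{2n}{2n+1}$, $\tfrac{2n+4}{2n+3}$, etc.\ fall out of the derivative identities with no beta-function bookkeeping. What your route buys is self-containedness within this paper: it leans only on the densities \eqref{eq:distXep} proved in the Appendix plus the standard Euler integral $\int_0^t v^{\alpha-1}(t-v)^{\gamma-\alpha-1}\rme^{cv}\,\rmd v=t^{\gamma-1}\tfrac{\Gamma(\alpha)\Gamma(\gamma-\alpha)}{\Gamma(\gamma)}\Phi(\alpha,\gamma;ct)$, rather than on the external mgf formulas of \cite{JAP51} (note only that \cite[9.111]{GR} is the Gauss, not the confluent, integral representation, so you should cite the Kummer analogue). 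The one point you flagged as an ``obstacle'' --- expanding $(2\xi-t)^m$ in powers of the variable whose exponent is allowed to grow --- is handled correctly in your plan and is indeed the only piece of nontrivial bookkeeping.
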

\begin{proof}
Consider 
\begin{equation*}
%\label{def:mgf}
\begin{aligned}
\psi_i(z, t)=&\EE_i\exp(z\TT(t))=\EE\left[\exp(z\TT(t))~|~\ep(0)=i\right],\\
\psi_i(z, t; n)=&\EE_i\left[\exp(z\TT(t))\1_{\{N(t)=n\}}\right],\qquad n\geq0,
\end{aligned}
\qquad i\in\{0, 1\},\end{equation*}
and 
\begin{equation*}
%\label{def:partial_mgf}
\psi_i^j(z, t)=\EE_i^j\left[\exp(z\TT(t))\right]=\EE_i\left[\exp(z\TT(t))\1_{\{\ep(t)=j\}}\right],\qquad
i, j\in\{0, 1\},
\end{equation*}
corresponding to the moment generating function of $\TT(t).$ 
Notice that
$\psi_i(z, t)=\sum\limits_{n=0}^\infty \psi_i(z, t; n)$
and
\[
\psi_i^i(z, t)=\sum_{n=0}^\infty \psi_i(z, t; 2n),\qquad
\psi_i^{1-i}(z, t)=\sum_{n=0}^\infty \psi_i(z, t; 2n+1),
\quad i\in\{0, 1\}.
\]

The explicit expressions for $\psi_0(z, t; n)$ and $\psi_1(z, t; n)$ 
can be written separately for even and odd $n,\;n\geq0.$ 
Due to \cite[Theorem 2.1]{JAP51}, we have 
\begin{align}
    \psi_0(z, t; 2n)
    \label{eq:psi02n}
&=\frac{\la_0^n\la_1^n}{(2n)!} t^{2n}\Phi\left(n, 2n+1; 2(\beta-az)t\right)\exp(-(\la_0-az)t),\\
      \psi_1(z, t; 2n)
    \label{eq:psi12n}
&=\frac{\la_0^n\la_1^n}{(2n)!} t^{2n}\Phi(n, 2n+1; 2(az-\beta)t)\exp(-(\la_1+az)t), 
\end{align}
\begin{align}
    \psi_0(z, t; 2n+1) 
    \label{eq:psi02n+1}
   &=\frac{\la_0^{n+1}\la_1^n}{(2n+1)!} t^{2n+1}\Phi\left(n+1, 2n+2; 2(\beta-az)t\right)\exp(-(\la_0-az)t),\\
      \psi_1(z, t; 2n+1) 
    \label{eq:psi12n+1}
&=\frac{\la_0^n\la_1^{n+1}}{(2n+1)!} t^{2n+1}      \Phi(n+1, 2n+2; 2(az-\beta)t)\exp(-(\la_1+az)t).
\end{align}

Formulae \eqref{eq:psi02n}-\eqref{eq:psi12n+1} directly give the desired
 result \eqref{eq:E00T}-\eqref{eq:E10T2}.
For instance, by differentiating in \eqref{eq:psi02n} we have
\[
\EE_0^0[\TT(t)]=\sum_{n=0}^\infty\frac{\pd\psi_0(z, t; 2n)}{\pd z}|_{z=0}
\]
\[
=a\rme^{-\la_0t}\sum_{n=0}^\infty\frac{\la_0^n\la_1^n}{(2n)!}t^{2n+1}
\Big[
-2\Phi'(n, 2n+1; 2\beta t)+\Phi(n, 2n+1; 2\beta t)
\Big]
\]
and
\[
\EE_0^0[\TT(t)^2]=\sum_{n=0}^\infty\frac{\pd^2\psi_0(z, t; 2n)}{\pd z^2}|_{z=0}
\]
\[
=a^2\rme^{-\la_0t}\sum_{n=0}^\infty\frac{\la_0^n\la_1^n}{(2n)!}t^{2n+2}
\Big[
4\Phi''(n, 2n+1; 2\beta t)-4\Phi'(n, 2n+1; 2\beta t)+\Phi(n, 2n+1; 2\beta t)
\Big].
\]
The following known identities,
\[
\Phi'(\alpha, \beta; z)=
\dfrac{\rmd\Phi}{\rmd z}(\alpha, \beta; z)=\dfrac{\alpha}{\beta}\Phi(\alpha+1, \beta+1;z)\]
and \[\Phi''(\alpha, \beta; z)=\dfrac{\alpha(\alpha+1)}{\beta(\beta+1)}\Phi(\alpha+2, \beta+2;z),\]
 see \cite[formula 9.213]{GR}, give the result, \eqref{eq:E00T}, \eqref{def:G1} and \eqref{eq:E00T2}, \eqref{def:G2}.
 Formulae \eqref{eq:E01T} and  \eqref{eq:E01T2} can be obtained similarly from \eqref{eq:psi02n+1}.
 
The remaining formulae of the theorem can be derived 
from \eqref{eq:E00T}-\eqref{eq:E01T} and \eqref{eq:E00T2}-\eqref{eq:E01T2}
by symmetry:
formula  \eqref{eq:E11T} follows from \eqref{eq:E00T};  \eqref{eq:E10T} follows from \eqref{eq:E01T};
 \eqref{eq:E11T2} follows from \eqref{eq:E00T2};  \eqref{eq:E10T2} follows from \eqref{eq:E01T2}
 after replacements $a\rightarrow -a$ and $\la_0\leftrightarrow \la_1.$
 $\hfill\Box$\end{proof}
 
 Formulae  \eqref{eq:E00T}-\eqref{eq:E11T2} permit us to evaluate the covariance between $\TT(t)$ and $\TT(s).$
 %%%%%%%%%%%%%%%%%%%%%%%%%%%%%%%%%%%%%%%%%%%%%%%%%%%%%
 \begin{theo}\label{theo:cov}
 For $t>s>0$ 
 \begin{align}
\label{eq:cov0}
\EE_0\TT(t)\TT(s)=& 
\EE_0\TT(t-s)\cdot\EE_0^0\TT(s)+\EE_1\TT(t-s)\cdot\EE_0^1\TT(s)+\EE_0[\TT(s)^2],\\
\label{eq:cov1}
\EE_1\TT(t)\TT(s)=&
\EE_0\TT(t-s)\cdot\EE_1^0\TT(s)+\EE_1\TT(t-s)\cdot\EE_1^1\TT(s)+\EE_1[\TT(s)^2],
\end{align}
where $\EE_i^j[\TT(s)],$ $\EE_i[\TT(t-s)]$ and $\EE_i[\TT(s)^2],\;i, j\in\{0, 1\},$ are given by
 \eqref{eq:E00T}-\eqref{eq:E11T2}.
 \end{theo}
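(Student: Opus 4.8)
The plan is to reduce the two-time product $\TT(t)\TT(s)$ to the one-time moments already computed in Theorem \ref{theo:ET}, by conditioning on $\mathcal F_s$ and invoking the Markov property of $\ep$ together with the persistence identity \eqref{eq:persistence}. The starting observation is the algebraic decomposition, valid for $t>s>0$,
\[
\TT(t)\TT(s)=\TT(s)\bigl(\TT(t)-\TT(s)\bigr)+\TT(s)^2 .
\]
Both $\TT(s)$ and $\ep(s)$ are $\mathcal F_s$-measurable, and since $|\TT(u)|\le\max(|a_0|,|a_1|)\,u$ for every $u\ge0$, all the expectations appearing below are finite; hence, taking $\EE_i[\cdot]$ and using the tower property,
\[
\EE_i[\TT(t)\TT(s)]=\EE_i\Bigl[\TT(s)\,\EE\bigl[\TT(t)-\TT(s)\mid\mathcal F_s\bigr]\Bigr]+\EE_i[\TT(s)^2],\qquad i\in\{0,1\}.
\]

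The next step is to evaluate the inner conditional expectation. The increment $\TT(t)-\TT(s)=\int_s^t a_{\ep(s')}\rmd s'$ is a functional of the trajectory of $\ep$ after time $s$; because $\ep$ is a time-homogeneous Markov process, the conditional law of that trajectory given $\mathcal F_s$ depends only on $\ep(s)$, and \eqref{eq:persistence} then gives
\[
\EE\bigl[\TT(t)-\TT(s)\mid\mathcal F_s\bigr]
=\EE_0[\TT(t-s)]\,\1_{\{\ep(s)=0\}}+\EE_1[\TT(t-s)]\,\1_{\{\ep(s)=1\}} .
\]
Substituting this into the previous display, pulling the (constant) factors $\EE_0[\TT(t-s)]$ and $\EE_1[\TT(t-s)]$ out, and recognising $\EE_i[\TT(s)\1_{\{\ep(s)=j\}}]=\EE_i^j[\TT(s)]$ in the notation fixed just before Theorem \ref{theo:ET}, I obtain exactly \eqref{eq:cov0} for $i=0$ and \eqref{eq:cov1} for $i=1$. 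Finally I would point out that each term on the right-hand sides — $\EE_i^j[\TT(s)]$, $\EE_i[\TT(t-s)]$ and $\EE_i[\TT(s)^2]$ — is given in closed form by \eqref{eq:E00T}–\eqref{eq:E11T2}.

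No step in this argument is genuinely difficult; the only point that deserves care is the conditioning step itself, namely making precise that $\TT(s)$ factors out of the conditional expectation and that $\EE[\TT(t)-\TT(s)\mid\mathcal F_s]$ collapses to a deterministic function of $\ep(s)$ alone. Everything after that is bookkeeping with the decomposition $\1_{\{\ep(s)=0\}}+\1_{\{\ep(s)=1\}}=1$ and the notation $\EE_i^j$.
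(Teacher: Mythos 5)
Your proposal is correct and follows essentially the same route as the paper: the same decomposition $\TT(t)\TT(s)=\TT(s)(\TT(t)-\TT(s))+\TT(s)^2$, followed by conditioning on $\ep(s)$ via persistence/time-homogeneity and recognising $\EE_i[\TT(s)\1_{\{\ep(s)=j\}}]=\EE_i^j[\TT(s)]$. Your version merely spells out the tower-property step with $\mathcal F_s$ and the indicator decomposition a bit more explicitly than the paper does.
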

 \begin{proof}
Notice that
 $%\[
 \EE_0\TT(t)\TT(s)=\EE_0\left[(\TT(t)-\TT(s))\cdot\TT(s)\right]+\EE_0[\TT(s)^2].
$% \]

Due to persistence and time-homogeneity of the process $\TT,$ see \eqref{eq:persistence},
 \[
 \begin{aligned}
     &\EE_0\left[(\TT(t)-\TT(s))\cdot\TT(s)\right]=   \\
   &\EE[(\TT(t)-\TT(s)~|~\ep(s)=0]\cdot\EE_0^0\TT(s)
 +\EE[(\TT(t)-\TT(s)~|~\ep(s)=1]\cdot\EE_0^1\TT(s) \\ 
 &=\EE_0\TT(t-s)\cdot\EE_0^0\TT(s)+\EE_1\TT(t-s)\cdot\EE_0^1\TT(s),
\end{aligned}
 \]
 which gives \eqref{eq:cov0}. Formula \eqref{eq:cov1} follows similarly.
 $\hfill\Box$\end{proof}
 
 %%%%%%%%%%%%%%%%%%%%%%%%%%%%%%%%%%%%%
 \begin{rem}
In the symmetric case  $\la_0=\la_1=\la>0,$
the results of Theorem \textup{\ref{theo:ET}} 
(formulae \eqref{eq:E00T}-\eqref{eq:E11T2}) and \eqref{eq:cov0}-\eqref{eq:cov1}
look much simpler, cf \cite{sectorul}.

Since $\beta=0$ and $\Phi(\cdot, \cdot; 0)=1,$ we have %
\[
G_n^{(1)}(t)|_{\beta=0}=G_n^{(2)}(t)|_{\beta=0}\equiv\frac{1}{2n+1},\qquad
H_n^{(1)}(t)|_{\beta=0}\equiv0,\qquad H_n^{(2)}(t)|_{\beta=0}\equiv\frac{1}{2n+3}.
\]
Therefore\textup{,} for the symmetric case\textup{,}  the first moments \eqref{eq:E00T}-\eqref{eq:E10T} are given by
\begin{equation}
\label{eq:EijT-symm}
\begin{aligned}
 \EE_0^0\TT(t)&=- \EE_1^1\TT(t)=at\rme^{-\la t}\sum_{n=0}^\infty\frac{(\la t)^{2n}}{(2n+1)!}
 =at\rme^{-\la t}\frac{\sinh{\la t}}{\la t} 
 =\frac{a}{2\la}\left(1-\rme^{-2\la t}\right),  \\
  \EE_0^1\TT(t)&=\EE_1^0\TT(t)=0,
\end{aligned}
\end{equation}
and
$%\begin{equation}
%\label{eq:ET}
\EE_0\TT(t)=-\EE_1\TT(t)=\dfrac{a}{2\la}\left(1-\rme^{-2\la t}\right).
$%\end{equation}

The second moments are given by 
\begin{equation*}
%\label{eq:EijT2-symm}
\begin{aligned}
  \EE_0^0\TT(t)^2  &=\EE_1^1\TT(t)^2\\
  &=(at)^2\rme^{-\la t}\sum_{n=0}^\infty\frac{(\la t)^{2n}}{(2n+1)!}=(at)^2\rme^{-\la t}\frac{\sinh{\la t}}{\la t}
  =\frac{a^2t}{2\la} \left(1-\rme^{-2\la t}\right),  \\
    \EE_0^1\TT(t)^2  &=\EE_1^0\TT(t)^2\\ 
    &=(at)^2\rme^{-\la t}\sum_{n=0}^\infty\frac{(\la t)^{2n+1}}{(2n+1)!(2n+3)}
    =(at)^2\rme^{-\la t}\left(\frac{\sinh z-z}{z}\right)'|_{z=\la t}\\
    &=\frac{a^2t}{2\la}\left(1+\rme^{-2\la t}\right)-\frac{a^2}{2\la^2}\left(1-\rme^{-2\la t}\right),
\end{aligned}
\end{equation*}
and, by summing we have
\begin{equation}\label{eq:E0T2-symm}
\EE_0\TT(t)^2=  \EE_0^0\TT(t)^2+  \EE_0^1\TT(t)^2    
=\frac{a^2}{2\la^2}\left(\rme^{-2\la t}-1+2\la t\right)=\EE_1\TT(t)^2.
\end{equation}

Formulae \eqref{eq:EijT-symm} and \eqref{eq:E0T2-symm} are consistent 
with known results, see e.g. \cite[(4.2.24)]{KR}. 

By \eqref{eq:cov0}-\eqref{eq:cov1}, \eqref{eq:EijT-symm} and \eqref{eq:E0T2-symm},
\[\begin{aligned}
&\EE_0[\TT(t)\TT(s)]=\EE_1[\TT(t)\TT(s)]\\
&=\frac{a}{2\la}\left(1-\rme^{-2\la(t-s)}\right)\cdot\frac{a}{2\la}\left(1-\rme^{-2\la s}\right)
+0+\frac{a^2}{2\la^2}\left(\rme^{-2\la s}-1+2\la s\right)
\end{aligned}\]
 \begin{equation}
\label{eq:cov0-symm}
=\frac{a^2}{4\la^2}\left[4\la s-(1+\rme^{-2\la(t-s)})(1-\rme^{-2\la s})\right],
\end{equation}
and 
the covariance becomes
 \begin{equation*}\begin{aligned}
%\label{eq:cov1-symm}
\mathrm{cov}(\TT(t),\;\TT(s))=&\EE[\TT(t)\cdot\TT(s)]-\EE[\TT(t)]\cdot\EE[\TT(s)]\\
=&\frac{a^2}{2\la^2}
\left[2\la s-1+\rme^{-2\la s}+\rme^{-2\la t}-\frac12\left(\rme^{-2\la(t-s)}+\rme^{-2\la(t+s)}\right)\right].
\end{aligned}\end{equation*}

It is known that under Kac\textup{'}s scaling\textup{,} $a, \la\to\infty,\;a^2/\la\to\sigma^2,$ 
see \cite{Kac,KR,STAPRO82}\textup{,}
the symmetric telegraph process  $\TT(t)$ converges to Brownian motion $\sigma W(t).$
Formulae  \eqref{eq:EijT-symm}, \eqref{eq:E0T2-symm} and \eqref{eq:cov0-symm} consist with this convergence:
under this scaling
 we have
 \begin{itemize}
  \item  by  \eqref{eq:EijT-symm},
  \[
\lim\EE_0[\TT(t)]=\lim\EE_1[\TT(t)]=0;
\]
  \item by \eqref{eq:E0T2-symm}, 
\[
\lim\EE_0[\TT(t)^2]=\lim\EE_1[\TT(t)^2]=\sigma^2t,
\]
  \item by \eqref{eq:cov0-symm}, 
\[
\lim\EE_0[\TT(t)\TT(s)]=\lim\EE_1[\TT(t)\TT(s)]=\sigma^2s,\qquad s\leq t.
\]
\end{itemize}
 \end{rem} 

\begin{rem}
Notice that the \textup{``}general\textup{"}  telegraph process $\TT(t),\;t\geq0,$ with two alternating velocities 
$a_0$ and $a_1,\;a_0>a_1,$ can be reduced to the symmetric case\textup{:}
\begin{equation*}
%\label{eq:symm-to}
\TT(t)\stackrel{D}{=}(a_0+a_1)t/2+\TT_{\pm a}^{\mathrm{sym}}(t),
\end{equation*}
where $\TT_{\pm a}^{\mathrm sym}(t)$ is the telegraph process with symmetric velocities $\pm a,\;a=(a_0-a_1)/2.$
Therefore\textup{,} without loss of generality, only a ``symmetric" process  $\TT(t),\;t\geq0,$ can be studied.
\end{rem}

\section{Conclusion}
 
The Ornstein-Uhlenbeck process of bounded variation is introduced and discussed in detail.
The definition is based on a version of  Langevin equation when Brownian motion
is replaced by a telegraph process.
This process has an unusual feature: for a finite time,
it falls into a certain interval and remains there forever. 

We study the distribution of this falling time.
The mean and variance of $X(t)$ are also presented explicitly.

In Appendix, we present several new assertions related 
to joint distribution of the telegraph particle position and the current velocity state. 
 
   %%%%%%%%%%%%%%%%%%%%%%%%%%%%%%%%%%%%%%%%%%%

\end{document}